\theoremstyle{definition}
\newtheorem{define}{Definition}[section]
\theoremstyle{plain}
\theoremstyle{plain}
\newtheorem{theorem}[define]{Theorem}
\theoremstyle{plain}
\newtheorem{lemma}[define]{Lemma}
\theoremstyle{plain}
\newtheorem{proposition}[define]{Proposition}
\theoremstyle{plain}
\newtheorem{corollary}[define]{Corollary}
\theoremstyle{remark}
\theoremstyle{definition}
\theoremstyle{definition}
\theoremstyle{definition}
\theoremstyle{definition}
\theoremstyle{definition}
\theoremstyle{definition}
\theoremstyle{definition}
\theoremstyle{definition}
\theoremstyle{definition}
\numberwithin{equation}{section}
\numberwithin{figure}{section}
\numberwithin{table}{section}
\title[Decomposition for rank-one reductive spherical pairs]
{A Cartan decomposition for non-symmetric reductive spherical pairs of rank-one type 
and its application to visible actions}
\author[A. Sasaki]{Atsumu SASAKI}
\thanks{
	This was partially supported by 
	Grand-in-Aid for Scientific Research (C) (No. 15K04797), 
	Japan Society for the Promotion of Science. 
}
\subjclass[2010]{Primary: 22E46, Secondary: 32M05, 22E60, 14M17}
\keywords{Spherical pair; Cartan decomposition; Cayley algebra; exceptional Lie group}
\address[A. Sasaki]{Department of Mathematics, Faculty of Science, Tokai University, 
4-1-1, Kitakaname, Hiratsuka, Kanagawa, 259-1292, Japan. }
\email{atsumu@tokai-u.jp}
\begin{document}

\begin{abstract}
A Cartan decomposition for symmetric pairs plays an important role 
to study not only orbit geometry of the symmetric spaces but also harmonic analysis on them. 
For non-symmetric reductive pairs, 
there are examples of generalizations of Cartan decompositions 
for some spherical complex homogeneous spaces such as 
complex line bundles over the complexified Hermitian symmetric spaces 
and triple spaces. 
This paper provides new examples of a Cartan decomposition for non-symmetric reductive pairs, 
namely, reductive non-symmetric spherical pairs of rank-one type. 
We also show that the action of some compact group on a non-symmetric reductive spherical 
homogeneous space of rank-one type is strongly visible. 
\end{abstract}

\maketitle



\section{Introduction}
\label{sec:intro}

Let $G$ be a connected real semisimple Lie group 
and $H$ a connected closed subgroup of $G$ which is reductive in $G$. 
We take a Cartan involution $\theta $ of $G$ satisfying $\theta (H)=H$ 
and set $K:=G^{\theta }$. 
Then, $K$ is a maximal compact subgroup of $G$. 
Let $\mathfrak{g}_0$, $\mathfrak{k}_0$ and $\mathfrak{h}_0$ 
be the Lie algebras of $G$, $K$ and $H$, respectively. 
We write $\mathfrak{g}_0=\mathfrak{k}_0+\mathfrak{p}_0$ 
for the corresponding Cartan decomposition of the Lie algebra $\mathfrak{g}_0$ 
and $\mathfrak{g}_0=\mathfrak{h}_0+\mathfrak{q}_0$ for the direct sum decomposition 
with respect to the Killing form of $\mathfrak{g}_0$. 
Then, $\mathfrak{g}_0$ is decomposed into the direct sum as follows: 
\begin{align*}
\mathfrak{g}_0=\mathfrak{k}_0\cap \mathfrak{h}_0+\mathfrak{k}_0\cap \mathfrak{q}_0
	+\mathfrak{p}_0\cap \mathfrak{h}_0+\mathfrak{p}_0\cap \mathfrak{q}_0. 
\end{align*}
In this setting, 
if there exists an abelian subspace $\mathfrak{a}_0$ in $\mathfrak{p}_0\cap \mathfrak{q}_0$ such that 
\begin{align}
\label{eq:cartan}
G=K(\exp \mathfrak{a}_0)H, 
\end{align}
then we say that (\ref{eq:cartan}) is a Cartan decomposition for the pair $(G,H)$, 
or for the homogeneous space $G/H$. 

If $H$ coincides with $K$, 
then (\ref{eq:cartan}) is a usual Cartan decomposition. 
In the case where $(G,H)$ is a symmetric pair (note that $H$ is not necessary compact), 
the decomposition (\ref{eq:cartan}) holds (cf. Flensted-Jensen \cite{fj} and Rossmann \cite{rossmann}). 
Recently, 
we have investigated a decomposition $G_{\mathbb{C}}=G_uBH_{\mathbb{C}}$ 
with an abelian $B$ in $G_{\mathbb{C}}$ 
for some complex homogeneous spaces $G_{\mathbb{C}}/H_{\mathbb{C}}$ which are not symmetric but spherical 
such as $SO(4n+2,\mathbb{C})/SL(2n+1,\mathbb{C})$, $E_6(\mathbb{C})/Spin(10,\mathbb{C})$ 
(\cite{dedicata,pja}), triple space $(G'\times G'\times G')/G'$ where $G'=SL(2,\mathbb{C})$ 
(\cite{krotz}), 
and with a subset $B$ such as $SL(2n+1,\mathbb{C})/Sp(n,\mathbb{C})$ (\cite{jms}) 
and $SO(8,\mathbb{C})/G_2(\mathbb{C})$ (\cite{apam}). 
We expect that 
non-symmetric reductive real spherical pairs $(G,H)$, 
which are in connection with finite-multiplicity property of representations 
(see \cite{kobayashi-oshima} and references therein), 
have a Cartan decomposition (\ref{eq:cartan}) for some abelian $\mathfrak{a}_0$. 

From this viewpoint, 
the author is studying a Cartan decomposition (\ref{eq:cartan}) for reductive spherical pairs. 
This class contains complex symmetric pairs. 
Then, our interest is non-symmetric spherical ones. 

Here is a quick review on reductive spherical pairs. 
Let $G_{\mathbb{C}}$ be a connected complex semisimple Lie group 
and $H_{\mathbb{C}}$ a complex closed subgroup of $G_{\mathbb{C}}$. 
The pair $(G_{\mathbb{C}},H_{\mathbb{C}})$ is called {\it spherical}, 
or the complex homogeneous space $G_{\mathbb{C}}/H_{\mathbb{C}}$ is {\it spherical}, 
if a Borel subgroup of $G_{\mathbb{C}}$ has an open orbit in $G_{\mathbb{C}}/H_{\mathbb{C}}$. 
The classification of reductive spherical pairs $(G_{\mathbb{C}},H_{\mathbb{C}})$, 
namely, it is spherical and $H_{\mathbb{C}}$ is a reductive Lie group, has been given by 
Kr\"amer \cite{kramer} when $G_{\mathbb{C}}$ is simple 
and by Brion \cite{brion}, Mykytyuk \cite{mykytyuk} when $G_{\mathbb{C}}$ is semisimple. 

In this paper, 
we consider non-symmetric reductive spherical pairs $(G_{\mathbb{C}},H_{\mathbb{C}})$ 
which are of {\it rank-one type}, 
namely, $(G_{\mathbb{C}},H_{\mathbb{C}})$ satisfies one of Table \ref{table:rank-one}. 

\begin{table}[htbp]
\begin{align*}
\begin{array}{lccc}
\hline 
\mbox{Type} & G_{\mathbb{C}} & H_{\mathbb{C}} \\
\hline 
\mbox{R-1} & SO(7,\mathbb{C}) & G_2(\mathbb{C}) \\
\mbox{R-1}' & Spin(7,\mathbb{C}) & G_2(\mathbb{C}) \\
\mbox{R-2} & G_2(\mathbb{C}) & SL(3,\mathbb{C}) \\
\hline 
\end{array}
\end{align*}
\caption{Non-symmetric reductive spherical pairs of rank-one type}
\label{table:rank-one}
\end{table}

Now, we remark on Type R-1 and Type R-1$'$. 
The double covering group homomorphism 
$Spin(7,\mathbb{C})\to SO(7,\mathbb{C})$ induces the double covering map from 
$Spin(7,\mathbb{C})/G_2(\mathbb{C})$ (Type R-1$'$) to $SO(7,\mathbb{C})/G_2(\mathbb{C})$ (Type R-1). 
Since the definition of spherical homogeneous space is a local condition, 
it is sufficient for the classification to consider spherical homogeneous spaces up to coverings. 
In the study of (\ref{eq:cartan}), 
our argument for Type R-1$'$ is parallel to that for Type R-2, 
and our main result for Type R-1$'$ provides that for Type R-1. 
For this reason, 
we will also treat Type R-1$'$ in this paper. 

The terminology `rank-one type' comes from the geometric sense, 
namely, the corresponding homogeneous space is of rank one. 
Moreover, it can be explained from the representation theory. 
By Vinberg--Kimelfeld \cite{vinberg-kimelfeld}, 
a reductive pair $(G_{\mathbb{C}},H_{\mathbb{C}})$ is spherical 
if the space $\mathbb{C}[G_{\mathbb{C}}/H_{\mathbb{C}}]$ of regular functions on 
the complex homogeneous space $G_{\mathbb{C}}/H_{\mathbb{C}}$ is multiplicity-free 
as a representation of $G_{\mathbb{C}}$, and vice versa. 
The set of highest weights of $G_{\mathbb{C}}$ occurring 
in the multiplicity-free irreducible decomposition 
of $\mathbb{C}[G_{\mathbb{C}}/H_{\mathbb{C}}]$, called the support of $G_{\mathbb{C}}/H_{\mathbb{C}}$, 
is a semigroup. 
If $(G_{\mathbb{C}},H_{\mathbb{C}})$ is one of Table \ref{table:rank-one}, 
then the rank of the support equals one (see \cite{kramer}). 

Now, let us explain our main result. 
Let $(G_{\mathbb{C}},H_{\mathbb{C}})$ be a reductive spherical pair of rank-one type 
and $\theta $ a Cartan involution of $G_{\mathbb{C}}$ 
satisfying $\theta (H_{\mathbb{C}})=H_{\mathbb{C}}$. 
Let $\mathfrak{g},\mathfrak{h}$ be the Lie algebras of $G_{\mathbb{C}},H_{\mathbb{C}}$, respectively. 
We use the same letter $\theta$ to denote the differential automorphism on $\mathfrak{g}$. 
We set $\mathfrak{g}_u=\mathfrak{g}^{\theta }$ and $G_u:=G_{\mathbb{C}}^{\theta }$. 
Then, $\mathfrak{g}_u$ is a compact real form of $\mathfrak{g}$ and the Lie algebra of $G_u$. 
The corresponding Cartan decomposition of the Lie algebra $\mathfrak{g}$ is given by 
$\mathfrak{g}=\mathfrak{g}_u+\sqrt{-1}\mathfrak{g}_u$. 
Let $\mathfrak{q}$ be the orthogonal complement of $\mathfrak{h}$ in $\mathfrak{g}$ 
with respect to the Killing form of $\mathfrak{g}$. 
Under the setting, we give new examples of a Cartan decomposition (\ref{eq:cartan}). 
Namely, we prove: 

\begin{theorem}
\label{thm:main}
Let $(G_{\mathbb{C}},H_{\mathbb{C}})$ be a non-symmetric reductive spherical pair of rank-one type. 
Then, one can take a one-dimensional abelian subspace $\mathfrak{a}_0$ 
in $\sqrt{-1}\mathfrak{g}_u\cap \mathfrak{q}$ such that 
\begin{align*}
G_{\mathbb{C}}=G_u(\exp \mathfrak{a}_0)H_{\mathbb{C}}. 
\end{align*}
\end{theorem}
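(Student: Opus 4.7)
My plan is to realize each pair in Table~\ref{table:rank-one} concretely via the complexified Cayley algebra $\mathbb{O}_{\mathbb{C}} = \mathbb{O} \otimes_{\mathbb{R}} \mathbb{C}$, equipped with the complex bilinear extension $q$ of the octonionic norm form, and then to reduce the Cartan decomposition to the transitivity of the compact form $G_u$ on a sphere. Fix a base point $e_0$ as follows: for Type R-2 let $e_0$ be a unit imaginary octonion, so that $G_2(\mathbb{C})/SL(3,\mathbb{C})$ is identified with the affine quadric $\{z \in \operatorname{Im}\mathbb{O}_{\mathbb{C}} : q(z) = 1\}$; for Type R-1$'$ let $e_0 = 1 \in \mathbb{O}_{\mathbb{C}}$ so that the spin representation identifies $Spin(7,\mathbb{C})/G_2(\mathbb{C})$ with the affine quadric $\{z \in \mathbb{O}_{\mathbb{C}} : q(z) = 1\}$; and for Type R-1 work through the covering $Spin(7,\mathbb{C}) \twoheadrightarrow SO(7,\mathbb{C})$, which carries the decomposition for R-1$'$ onto one for R-1 since $G_2(\mathbb{C}) \cap \{\pm 1\} = \{1\}$ and the Lie algebras, hence $\mathfrak{a}_0$, are preserved.

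Given this realization, any element of the affine quadric decomposes as $z = v + \sqrt{-1}\,w$ with $v, w$ in the real form and satisfying $|v|^2 - |w|^2 = 1$ and $v \perp w$. The compact group $G_u$ acts transitively on the ambient real unit sphere ($Spin(7)$ on $S^7$, $G_2$ on $S^6$), and the isotropy $H_u := G_u \cap H_{\mathbb{C}}$ at $e_0$ in turn acts transitively on unit vectors of $e_0^{\perp}$ ($G_2$ on $S^6$ for R-1$'$ and $SU(3)$ on $S^5$ for R-2). Combining these two transitivities, I can move $(v, w)$ by $G_u$ to the standard pair $((\cosh t)\, e_0, (\sinh t)\, e_1)$ for some $t \in \mathbb{R}$, where $e_1$ is a fixed unit vector orthogonal to $e_0$. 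Hence every $G_u$-orbit on $G_{\mathbb{C}}/H_{\mathbb{C}}$ meets the curve
\begin{equation*}
\gamma(t) = (\cosh t)\, e_0 + \sqrt{-1}(\sinh t)\, e_1, \qquad t \in \mathbb{R}.
\end{equation*}

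To realize $\gamma$ as $(\exp \mathfrak{a}_0) \cdot e_0$ I take $X \in \mathfrak{m}_u := \mathfrak{g}_u \ominus \mathfrak{h}_u$ (Killing-orthogonal complement) characterized by $X \cdot e_0 = e_1$ under the ambient representation. Because $G_u/H_u \cong S^n$ admits a unique $G_u$-invariant Riemannian metric up to scale, the normal-homogeneous geodesic $\exp(tX)\cdot e_0$ coincides with the great circle through $e_0$ in direction $e_1$, whence $X^2 \cdot e_0 = -e_0$ and
\begin{equation*}
\exp(\sqrt{-1}\,tX)\cdot e_0 = (\cos \sqrt{-1}t)\, e_0 + (\sin \sqrt{-1}t)\, e_1 = (\cosh t)\, e_0 + \sqrt{-1}(\sinh t)\, e_1 = \gamma(t).
\end{equation*}
Setting $\mathfrak{a}_0 := \mathbb{R}\, \sqrt{-1}X$, the inclusion $\mathfrak{a}_0 \subset \sqrt{-1}\mathfrak{g}_u$ is immediate, and $\mathfrak{a}_0 \subset \mathfrak{q}$ follows because $X \perp \mathfrak{h}_u$ in the Killing form extends by $\mathbb{C}$-bilinearity to $X \perp \mathfrak{h} = \mathfrak{h}_u \oplus \sqrt{-1}\mathfrak{h}_u$, and $\mathfrak{q}$ is a complex subspace containing $X$.

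The main obstacle is to produce $X$ explicitly and to verify these properties uniformly across the three cases: one must identify an element of $\mathfrak{g}_u$ whose action on the ambient Cayley-algebra representation realizes the prescribed plane rotation, which for Type R-2 demands working with octonion-algebra derivations and for Type R-1$'$ demands working through the spin representation. A secondary point is to verify that the descent from R-1$'$ to R-1 actually goes through cleanly despite the change of group from $Spin(7,\mathbb{C})$ to $SO(7,\mathbb{C})$, i.e., that the same $\mathfrak{a}_0$ continues to serve after quotienting by the kernel of the covering.
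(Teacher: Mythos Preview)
Your proposal is correct and follows essentially the same route as the paper: realize $G_{\mathbb{C}}/H_{\mathbb{C}}$ as an affine quadric in the (imaginary) complexified octonions, use the two nested sphere transitivities of $(G_u,H_u)$ to reduce every $G_u$-orbit to the hyperbolic arc $\gamma(t)=(\cosh t)e_0+\sqrt{-1}(\sinh t)e_1$, and pass from Type~R-1$'$ to Type~R-1 via the double cover $\pi$. The only substantive difference is in how you produce $\mathfrak{a}_0$: the paper writes down explicit $8\times 8$ matrices for $\exp\mathfrak{a}_0$ and checks by hand that they lie in $G_{\mathbb{C}}$ and that $\mathfrak{a}_0\subset\sqrt{-1}\mathfrak{g}_u\cap\mathfrak{q}$, whereas you argue abstractly that the isotropy-irreducibility of $G_u/H_u\cong S^n$ forces the normal-homogeneous geodesic $\exp(tX)\cdot e_0$ (with $X\in\mathfrak{m}_u$, $Xe_0=e_1$) to be the great circle, whence $X^2e_0=-e_0$ and analytic continuation gives $\gamma$; this is cleaner and avoids octonion computations, but it does not deliver the explicit matrix description of $\mathfrak{a}_0$ that the paper lists as one of its aims.
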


The purpose of this paper is to provide an explicit description of $\mathfrak{a}_0$ 
satisfying Theorem \ref{thm:main} under the realization of $G_{\mathbb{C}}$ and $H_{\mathbb{C}}$ 
as matrix groups. 

Thanks to Theorem \ref{thm:main}, 
we find strongly visible actions on non-symmetric complex reductive homogeneous spaces. 
We state: 

\begin{theorem}[see Theorem \ref{thm:visible-slice}]
\label{thm:visible}
Let $(G_{\mathbb{C}},H_{\mathbb{C}})$ be a non-symmetric reductive spherical pair of rank-one type. 
Then, the $G_u$-action on the complex homogeneous space $G_{\mathbb{C}}/H_{\mathbb{C}}$ 
is strongly visible (see Section \ref{sec:visible} for the definition). 
\end{theorem}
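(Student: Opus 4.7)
My plan is to use Theorem \ref{thm:main} to produce a one-dimensional totally real slice, and then to build an anti-holomorphic involution fixing it by twisting the Cartan involution $\theta$ by a suitably chosen element of $G_u\cap H_{\mathbb{C}}$. Writing $o := eH_{\mathbb{C}}$ and $X := G_{\mathbb{C}}/H_{\mathbb{C}}$, I would take as slice
\[
S := \exp(\mathfrak{a}_0)\cdot o,
\]
where $\mathfrak{a}_0\subset\sqrt{-1}\mathfrak{g}_u\cap\mathfrak{q}$ is the one-dimensional subspace supplied by Theorem \ref{thm:main}. The equality $G_u\exp(\mathfrak{a}_0)H_{\mathbb{C}} = G_{\mathbb{C}}$ gives $G_u\cdot S = X$ at once, so every $G_u$-orbit meets $S$. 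Moreover $S$ is totally real in $X$: at the origin $T_oS = \mathfrak{a}_0$ sits inside $T_oX\cong\mathfrak{q}$, and the complex structure $J$ on $T_oX$ is multiplication by $\sqrt{-1}$, so
\[
T_oS\cap JT_oS \;=\; \mathfrak{a}_0\cap\sqrt{-1}\mathfrak{a}_0 \;\subset\; \sqrt{-1}\mathfrak{g}_u\cap\mathfrak{g}_u \;=\; \{0\}
\]
inside $\mathfrak{g}$; translating along $\exp(\mathfrak{a}_0)$ propagates the property to every other point of $S$.

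To produce the anti-holomorphic involution I would argue as follows. The Cartan involution $\theta$ is complex conjugation on $G_{\mathbb{C}}$ with respect to $G_u$, hence is anti-holomorphic, and since $\theta(H_{\mathbb{C}})=H_{\mathbb{C}}$ it descends to an anti-holomorphic involution $\widetilde\theta$ of $X$. By itself $\widetilde\theta$ sends $\exp(tX_0)\cdot o$ to $\exp(-tX_0)\cdot o$, so I would correct it by composing with a left translation: choose $w_0\in G_u\cap H_{\mathbb{C}}$ with $\operatorname{Ad}(w_0)|_{\mathfrak{a}_0} = -\mathrm{id}$, and set $\sigma := L_{w_0}\circ\widetilde\theta$. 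Then $\sigma$ is anti-holomorphic, and the relation $w_0 a^{-1} = a w_0$ together with $w_0\cdot o = o$ yields $\sigma(a\cdot o) = a w_0\cdot o = a\cdot o$ for every $a\in\exp(\mathfrak{a}_0)$, so $\sigma|_S = \mathrm{id}_S$. Using $\theta|_{G_u} = \mathrm{id}$ one obtains the equivariance $\sigma(u\cdot x) = (w_0 u w_0^{-1})\cdot\sigma(x)$ for $u\in G_u$, and combining this with $\sigma|_S = \mathrm{id}_S$ shows $\sigma(y)\in G_u\cdot y$ for every $y\in G_u\cdot S = X$, so $\sigma$ preserves every $G_u$-orbit. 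This verifies the three conditions of strong visibility.

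The hard part is to exhibit the ``Weyl element'' $w_0\in G_u\cap H_{\mathbb{C}}$ inverting the chosen generator $X_0$ of $\mathfrak{a}_0$. Conceptually its existence reduces to the fact that the compact subgroup $L := G_u\cap H_{\mathbb{C}}$ acts on the real vector space $\sqrt{-1}\mathfrak{g}_u\cap\mathfrak{q}$ through a representation that is transitive on the unit sphere in each of our rank-one cases: for R-1 and R-1$'$ the group $L$ is the compact real form of $G_2$ acting on the seven-dimensional space of imaginary Cayley numbers and transitive on $S^6$, and for R-2 one has $L = SU(3)$ acting on $\mathbb{C}^3$ and transitive on $S^5$. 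The remaining, mostly computational, task will be to identify $w_0$ explicitly inside the matrix (or Cayley algebra) realizations used in the proof of Theorem \ref{thm:main}, so that the slice $S$ and the anti-holomorphic involution $\sigma$ are given by concrete formulas matching the explicit $\mathfrak{a}_0$.
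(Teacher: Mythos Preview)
Your argument is correct and follows the same overall architecture as the paper: use Theorem~\ref{thm:main} for the slice condition, build the anti-holomorphic diffeomorphism by twisting the Cartan involution $\theta$ by an element that inverts $\mathfrak{a}_0$, and then derive the orbit-preservation condition from the first two. The one substantive difference is in how the twist is implemented. You compose $\widetilde\theta$ with a \emph{left translation} by some $w_0\in G_u\cap H_{\mathbb{C}}$ whose existence you deduce from the transitivity of $L=G_u\cap H_{\mathbb{C}}$ on the unit sphere of $\sqrt{-1}\mathfrak{g}_u\cap\mathfrak{q}$; the paper instead twists $\theta$ by the \emph{inner automorphism} $g\mapsto I_{+-}\,\overline{g}\,I_{+-}$ for one explicit element $I_{+-}=\operatorname{diag}(1,-1,\dots,1,-1)\in G_2$, checks directly that this group involution stabilizes $H_{\mathbb{C}}$ in each case, and then descends to $X$. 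For Types R-1 and R-1$'$ the two constructions coincide on $X$ (there $I_{+-}\in H_{\mathbb{C}}$), but for Type R-2 one has $I_{+-}e_1=-e_1$, so $I_{+-}\notin SL(3,\mathbb{C})$ and the paper's $\sigma$ is genuinely induced by a group automorphism rather than a left translation; your $w_0$ in that case must be a different element of $SU(3)$. The payoff of the paper's choice is that $\sigma_0$ is an honest involution of $G_{\mathbb{C}}$ and one can read off that $\mathfrak{g}^{\sigma_0}$ is the split real form; your approach is more conceptual and avoids any explicit matrix computation, at the cost of $\sigma$ not obviously being an involution (which, as you implicitly note, the definition does not require). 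Your separate verification that $S$ is totally real is fine but unnecessary: the paper observes that this is automatic once the three visibility conditions hold.
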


This paper is organized as follows. 
In Section \ref{sec:preliminaries}, 
we explain a matrix realization of Lie groups which appear in Table \ref{table:rank-one}. 
In Sections \ref{sec:g2sl3}--\ref{sec:so7g2}, 
we prove Theorem \ref{thm:main} for each reductive spherical pair of rank-one type 
by giving an explicit description of an abelian group $A$. 
In particular, we deal with Type R-2 in Section \ref{sec:g2sl3}, 
Type R-1$'$ in Section \ref{sec:spin7g2} and Type R-1 in Section \ref{sec:so7g2}. 
In Section \ref{sec:visible}, 
we show Theorem \ref{thm:visible}, in particular, 
we explain that Theorem \ref{thm:main} is an essential part of our proof of Theorem \ref{thm:visible}. 

The author would like to express his thank to an anonymous referee 
for careful comments and suggestions. 


\section{Preliminaries}
\label{sec:preliminaries}

\subsection{Realization of exceptional Lie group of Type G$_2$}
\label{subsec:realization-g2}

In this subsection, we explain realizations of $G_2(\mathbb{C})$, 
and its maximal compact subgroup $G_2$ as subgroups of 
the special orthogonal group on the complexified Cayley algebra. 

Let $\mathfrak{C}$ be a Cayley algebra over $\mathbb{R}$. 
This algebra $\mathfrak{C}$ has the standard basis $\{ e_0,\ldots ,e_7\} $ 
with $e_0$ as the unit element of $\mathfrak{C}$ such that the following relations hold, 
which we will fix in this paper: 
\begin{align}
\label{eq:relation}
\left\{ 
\begin{array}{l}
e_0^2=e_0,~e_i^2=-e_0~(i=1,2,\ldots ,7),~e_1e_2=-e_2e_1=e_3,\\
e_1e_4=-e_4e_1=e_5,~e_1e_6=-e_6e_1=e_7,~e_2e_5=-e_5e_2=e_7,\\
e_2e_6=-e_6e_2=e_4,~e_3e_5=-e_5e_3=e_6,~e_3e_4=-e_4e_3=e_7. 
\end{array}
\right. 
\end{align}
Let $(\cdot ,\cdot )$ be an inner product on $\mathfrak{C}$ 
satisfying $(e_i,e_j)=\delta _{ij}$ ($0\leq i,j\leq 7$). 
We denote by $\operatorname{Re}(\mathfrak{C})=\mathbb{R}e_0$ the real part of $\mathfrak{C}$ 
by $\operatorname{Im}(\mathfrak{C})=\mathbb{R}e_1+\cdots +\mathbb{R}e_7$ 
the imaginary part of $\mathfrak{C}$. 
Then, $\operatorname{Im}(\mathfrak{C})$ is the orthogonal complement of $\operatorname{Re}(\mathfrak{C})$ 
in $\mathfrak{C}$ with respect to $(\cdot ,\cdot )$. 

We identify the special orthogonal group $SO(\mathfrak{C},(\cdot ,\cdot ))$ 
with $SO(8)=\{ g\in SL(8,\mathbb{R}):{}^tgg=I_8\} $ 
where ${}^tg$ denotes the transposed matrix of $g$ and $I_8$ the unit matrix. 
Similarly, we have 
$SO(\operatorname{Im}(\mathfrak{C}),(\cdot ,\cdot ))
=\{ g\in SO(\mathfrak{C},(\cdot ,\cdot )):ge_0=e_0\} \simeq SO(7)$. 
We define $G_2$ as the automorphism group 
$\operatorname{Aut}_{\mathbb{R}}(\mathfrak{C})$ of $\mathfrak{C}$, namely, 
\begin{align*}
G_2=\{ g\in SO(8):(gx)(gy)=g(xy)~(x,y\in \mathfrak{C})\} . 
\end{align*}
Then, $G_2$ is a connected and simply connected compact simple Lie group of exceptional type 
with Lie algebra $\mathfrak{g}_2$. 
By definition, 
any element $g\in G_2$ satisfies $ge_0=e_0$, 
from which $G_2$ is a subgroup of $SO(7)$. 
In particular, $\operatorname{Im}(\mathfrak{C})$ is $G_2$-invariant. 

Let $\mathfrak{C}_{\mathbb{C}}=\mathfrak{C}\otimes _{\mathbb{R}}\mathbb{C}$ be 
the complexified Cayley algebra and 
$\operatorname{Im}(\mathfrak{C}_{\mathbb{C}})$ the complexification of 
$\operatorname{Im}(\mathfrak{C})$. 
We extend the symmetric bilinear form $(\cdot ,\cdot )$ on $\mathfrak{C}$ 
to a complex symmetric bilinear form 
$(\cdot ,\cdot ):\mathfrak{C}_{\mathbb{C}}\times \mathfrak{C}_{\mathbb{C}}\to \mathbb{C}$ 
on $\mathfrak{C}_{\mathbb{C}}$, namely, 
$(v,w)=v_0w_0+\cdots +v_7w_7$ for $v=v_0e_0+\cdots +v_7e_7$, $w=w_0e_0+\cdots +w_7e_7
\in \mathfrak{C}_{\mathbb{C}}$. 
We identify the complex special orthogonal group $SO(\mathfrak{C}_{\mathbb{C}},(\cdot ,\cdot ))$ 
with $SO(8,\mathbb{C})=\{ g\in SL(8,\mathbb{C}):{}^tgg=I_8\} $ 
and $SO(\operatorname{Im}(\mathfrak{C}_{\mathbb{C}}),(\cdot ,\cdot ))$ with $SO(7,\mathbb{C})$ 
with respect to the $\mathbb{C}$-basis $\{ e_0,\ldots ,e_7\} $. 
We set $G_2(\mathbb{C}):=\operatorname{Aut}_{\mathbb{C}}(\mathfrak{C}_{\mathbb{C}})$, namely, 
\begin{align*}
G_2(\mathbb{C})
=\{ g\in SO(8,\mathbb{C}):(gx)(gy)=g(xy)~(x,y\in \mathfrak{C}_{\mathbb{C}})\} . 
\end{align*}
Then, $G_2(\mathbb{C})$ is a connected and simply connected complex simple Lie group 
of exceptional type with Lie algebra 
$\mathfrak{g}_2(\mathbb{C}):=\mathfrak{g}_2\otimes _{\mathbb{R}}\mathbb{C}$. 
This is a subgroup of $SO(7,\mathbb{C})$, and then, 
$\operatorname{Im}(\mathfrak{C}_{\mathbb{C}})$ is $G_2(\mathbb{C})$-invariant. 

The subgroup 
$G'(\mathbb{C}):=\{ g\in G_2(\mathbb{C}):ge_1=e_1\} $ of $G_2(\mathbb{C})$ 
is isomorphic to the special linear group $SL(3,\mathbb{C})$, 
and $G'(\mathbb{C})\cap G_2$ to the special unitary group $SU(3)$. 
In this paper, 
we shall identify $G'(\mathbb{C})$ and $G'(\mathbb{C})\cap G_2$ with $SL(3,\mathbb{C})$ and $SU(3)$, 
respectively: 
\begin{align*}
SL(3,\mathbb{C})&=\{ g\in G_2(\mathbb{C}):ge_1=e_1\} ,~
SU(3)=\{ g\in G_2:ge_1=e_1\} . 
\end{align*}

\subsection{Realization of spinor group $Spin(7,\mathbb{C})$}
\label{subsec:realization-spin7}

Next, we realize the spinor group $Spin(7,\mathbb{C})$ 
as a subgroup of $SO(8,\mathbb{C})=SO(\mathfrak{C}_{\mathbb{C}},(\cdot ,\cdot ))$ as follows. 

We define a subgroup $B_3(\mathbb{C})$ of $SO(8,\mathbb{C})$ by 
\begin{multline*}
B_3(\mathbb{C}):=\{ g\in SO(8,\mathbb{C}):\mbox{there exists $g_0\in SO(7,\mathbb{C})$} \\
\mbox{such that $(g_0x)(gy)=g(xy)$ ($x,y\in \mathfrak{C}_{\mathbb{C}}$)}\} 
\end{multline*}
and $B_3$ of $SO(8)$ by 
\begin{multline*}
B_3:=\{ g\in SO(8):\mbox{there exists $g_0\in SO(7)$} \\
\mbox{such that $(g_0x)(gy)=g(xy)$ ($x,y\in \mathfrak{C}$)}\} . 
\end{multline*}
Then, $G_2(\mathbb{C})$ and $G_2$ are subgroups of $B_3(\mathbb{C})$ and $B_3$, 
respectively. 
In particular, they are of the forms: 
\begin{align*}
G_2(\mathbb{C})=\{ g\in B_3(\mathbb{C}):ge_0=e_0\} ,~
G_2=\{ g\in B_3:ge_0=e_0\} .
\end{align*}

We will see that $B_3(\mathbb{C})$ is isomorphic to $Spin(7,\mathbb{C})$ as follows. 
Let $g$ be an element of $B_3(\mathbb{C})$. 
By definition, we take $g_0\in SO(7,\mathbb{C})$ such that 
$(g_0x)(gy)=g(xy)$ ($x,y\in \mathfrak{C}_{\mathbb{C}}$). 
By the principle of triality in $SO(8,\mathbb{C})$, 
the existence of such $g_0$ is unique. 
Under the notation, this yields the following map 
\begin{align}
\label{eq:covering}
\pi :B_3(\mathbb{C})\to SO(7,\mathbb{C}),\quad g\mapsto \pi (g)=g_0
\end{align}
Then, the following equality holds for any $g_1,g_2\in B_3(\mathbb{C})$: 
\begin{align*}
(g_1g_2)(xy)&=g_1(g_2(xy))\\
&=g_1((\pi (g_2)x)(g_2y))\\
&=(\pi (g_1)(\pi (g_2)x))(g_1(g_2y))\\
&=((\pi (g_1)\pi (g_2))x)((g_1g_2)y) \quad (x,y\in \mathfrak{C}_{\mathbb{C}}). 
\end{align*}
This means that $\pi (g_1g_2)=\pi (g_1)\pi (g_2)$ for any $g_1,g_2\in B_3(\mathbb{C})$, from which 
$\pi$ is a group homomorphism. 
On the other hand, 
let us take an element $g_0\in SO(7,\mathbb{C})$. 
It follows from the principle of triality to $g_0$ that 
there exists $g\in SO(8,\mathbb{C})$ such that $(g_0x)(gy)=g(xy)$ ($x,y\in \mathfrak{C}_{\mathbb{C}})$, 
from which $g\in B_3(\mathbb{C})$. 
This means that $\pi$ is a surjective map. 
Moreover, if $g,g'\in B_3(\mathbb{C})$ satisfy $\pi (g)=\pi (g')$, 
then $g'$ coincides either $g$ or $-g$. 
Thus, $\pi$ is a double covering map. 
Therefore, $B_3(\mathbb{C})$ is isomorphic to $Spin(7,\mathbb{C})$. 
Similarly, we have $B_3\simeq Spin(7)$. 

Throughout this paper, 
$Spin(7,\mathbb{C})$ means $B_3(\mathbb{C})$ and $Spin(7)$ means $B_3$. 

\subsection{Cartan involution}
\label{subsec:cartan involution}

As mentioned before, 
we take a matrix realization of each complex simple Lie group and its maximal compact subgroup. 
Then, there exists a Cartan involution of each complex Lie group such that 
the fixed point set equals the maximal compact subgroup. 
In this subsection, we express such a Cartan involution as follows. 

Let us define an involutive automorphism $\theta $ on $SO(8,\mathbb{C})$ by 
\begin{align}
\label{eq:theta}
\theta (g)=\overline{g}\quad (g\in SO(8,\mathbb{C})). 
\end{align} 
Here, $\overline{g}$ stands for the complex conjugation $\overline{g}$ of 
$g\in SO(8,\mathbb{C})$. 
Then, $\theta $ is a Cartan involution of $SO(8,\mathbb{C})$ 
and its fixed point set $SO(8)$ is a maximal compact subgroup of $SO(8,\mathbb{C})$. 
The restrictions of $\theta$ to $B_3(\mathbb{C})$ and $G_2(\mathbb{C})$ becomes 
Cartan involutions on $B_3(\mathbb{C})$ and $G_2(\mathbb{C})$, respectively. 
Thus, $B_3$ is a maximal compact subgroup of $B_3(\mathbb{C})$ and 
$G_2$ is that of $G_2(\mathbb{C})$. 

\subsection{Unit sphere and complex unit sphere}
\label{subsec:sphere}

In this subsection, 
we review the facts on transitive actions and isotropy subgroups. 

Let $W=(\mathbb{R}e_1)^{\bot}=\mathbb{R}e_2+\cdots +\mathbb{R}e_7$ 
be the orthogonal complement of $\mathbb{R}e_1$ 
in $\operatorname{Im}(\mathfrak{C})$. 
For a vector space $V$ $(=\mathfrak{C},\operatorname{Im}(\mathfrak{C}),W)$ over $\mathbb{R}$, 
we write $S(V)=\{ v\in V:(v,v)=1\} $ for the unit sphere in $V$ with respect to $(\cdot ,\cdot )$. 
As $G_2\subset SO(7)$ and $Spin(7)\subset SO(8)$, 
$Spin(7)$ acts on $S(\mathfrak{C})=S^7$ and $G_2$ acts on $S(\operatorname{Im}(\mathfrak{C}))=S^6$. 
Further, 
It is known that 

\begin{lemma}[{\cite{borel,m-s}}]
\label{lem:sphere}
\begin{enumerate}
	\item The $Spin(7)$-action on $S^7$ is transitive, 
	and then $S^7$ is diffeomorphic to $Spin(7)/G_2$. 
	\item The $G_2$-action on $S^6$ is transitive, 
	and then $S^6$ is diffeomorphic to $G_2/SU(3)$. 
	\item The $SU(3)$-action on $S(W)=S^5$ is transitive. 
\end{enumerate}
\end{lemma}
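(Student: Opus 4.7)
For all three parts, the strategy is the standard one: fix a base point in the target sphere, identify its stabilizer in the acting group, and deduce transitivity by comparing dimensions and using that the orbit of a compact group on a connected manifold is closed (hence fills the sphere as soon as it is open).

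For part (1), the natural base point is $e_0\in S^7$, and by the very definition $G_2=\{g\in B_3 : ge_0=e_0\}$ from Subsection \ref{subsec:realization-g2}, the isotropy subgroup of $e_0$ in $Spin(7)=B_3$ is $G_2$. Since $\dim Spin(7)-\dim G_2=21-14=7=\dim S^7$, the orbit $Spin(7)\cdot e_0$ is open in $S^7$; combined with closedness it equals $S^7$, and the orbit map yields the diffeomorphism $S^7\simeq Spin(7)/G_2$. Part (2) is entirely parallel: take $e_1\in S^6\subset \operatorname{Im}(\mathfrak{C})$, use $SU(3)=\{g\in G_2 : ge_1=e_1\}$ as the stabilizer, and observe $\dim G_2-\dim SU(3)=14-8=6=\dim S^6$.

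Part (3) requires a small extra identification. Any $g\in SU(3)$ fixes $e_1$ and is an orthogonal transformation of $\operatorname{Im}(\mathfrak{C})$, so it preserves the $6$-dimensional subspace $W=(\mathbb{R}e_1)^{\perp}$, making the action well defined. The key step is to recognize this action as the standard $SU(3)$-action on $\mathbb{C}^3$. The relation $e_1^2=-e_0$ together with the multiplication table (\ref{eq:relation}) shows that left multiplication $L_{e_1}$ preserves $W$ and satisfies $L_{e_1}^2=-\mathrm{id}_W$, equipping $W$ with a complex structure under which $W\cong \mathbb{C}^3$ with $\mathbb{C}$-basis $\{e_2,e_4,e_6\}$. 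Every $g\in SU(3)$ is a Cayley algebra automorphism fixing $e_1$, hence commutes with $L_{e_1}$ on $W$ and acts $\mathbb{C}$-linearly; combined with preservation of the real inner product, the action preserves the induced Hermitian form on $\mathbb{C}^3$ and factors through a homomorphism $SU(3)\to U(3)$. By connectedness and dimension this factors as the standard inclusion, and the classical transitivity of $SU(3)$ on $S^5\subset\mathbb{C}^3$ concludes the proof.

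The main obstacle is the bookkeeping in part (3), in particular verifying that $L_{e_1}$ preserves $W$ and squares to $-\mathrm{id}_W$: because $\mathfrak{C}$ is non-associative one cannot blindly write $e_1(e_1 x)=e_1^2 x$, and one must invoke the alternative identity $a(ax)=a^2 x$ instead. Parts (1) and (2) are then essentially formal once the matrix realizations of Subsections \ref{subsec:realization-g2} and \ref{subsec:realization-spin7} are in hand.
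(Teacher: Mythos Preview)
The paper does not prove this lemma; it is stated with attribution to Borel and Montgomery--Samelson and used as a black box throughout. Your proposal therefore supplies what the paper deliberately omits, and the route you take---reading off the stabilizers from the explicit realizations in Sections~\ref{subsec:realization-g2}--\ref{subsec:realization-spin7}, then matching dimensions and using that orbits of compact groups are closed---is the standard self-contained argument and is correct.

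One small tightening in part~(3). Your verification that $L_{e_1}$ preserves $W$ and squares to $-\mathrm{id}_W$ via the alternative law $a(ax)=a^2x$ is fine, as is the observation that any $g\in SU(3)$ commutes with $L_{e_1}$ and hence acts $\mathbb{C}$-linearly and unitarily on $(W,L_{e_1})\cong\mathbb{C}^3$. The phrase ``by connectedness and dimension this factors as the standard inclusion'' is slightly terse: to pin down the image inside $U(3)$, compose your homomorphism with $\det\colon U(3)\to U(1)$; since $SU(3)$ is simple and connected this composite is trivial, so the image lies in the standard $SU(3)\subset U(3)$, and faithfulness of the original action on $W$ together with equality of dimensions forces the image to be all of $SU(3)$. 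With that sentence added, the transitivity on $S^5$ follows from the classical fact for the standard action, and your argument is complete.
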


Next, 
$V_{\mathbb{C}}$ ($=\mathfrak{C}_{\mathbb{C}},\operatorname{Im}(\mathfrak{C}_{\mathbb{C}})$) 
denotes the complexification of $V$. 
We set $S(V_{\mathbb{C}}):=\{ v\in V_{\mathbb{C}}:(v,v)=1\} $ 
as the complex unit sphere in $V_{\mathbb{C}}$. 
Then, $Spin(7,\mathbb{C})$ acts on $S(\mathfrak{C}_{\mathbb{C}})=S_{\mathbb{C}}^7$ 
and $G_2(\mathbb{C})$ acts on $S(\operatorname{Im}(\mathfrak{C}_{\mathbb{C}}))=S_{\mathbb{C}}^6$. 

\begin{lemma}
\label{lem:complex-sphere}
\begin{enumerate}
	\item The $Spin(7,\mathbb{C})$-action on $S_{\mathbb{C}}^7$ is transitive, 
	and then $S_{\mathbb{C}}^7$ is biholomorphic to $Spin(7,\mathbb{C})/G_2(\mathbb{C})$. 
	\item The $G_2(\mathbb{C})$-action on $S_{\mathbb{C}}^6$ is transitive, 
	and then 
	$S_{\mathbb{C}}^6$ is biholomorphic to $G_2(\mathbb{C})/SL(3,\mathbb{C})$. 
\end{enumerate}
\end{lemma}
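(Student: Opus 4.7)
The plan is to treat both parts of the lemma in parallel, since their structure is identical; for part (1) I consider the orbit map $\varphi _1:Spin(7,\mathbb{C})/G_2(\mathbb{C})\to S_{\mathbb{C}}^7$ sending $gG_2(\mathbb{C})$ to $ge_0$, and for part (2) the orbit map $\varphi _2:G_2(\mathbb{C})/SL(3,\mathbb{C})\to S_{\mathbb{C}}^6$ sending $gSL(3,\mathbb{C})$ to $ge_1$. I first note that each action preserves the relevant complex quadratic form by the matrix realizations set up in Section \ref{sec:preliminaries}: $Spin(7,\mathbb{C})=B_3(\mathbb{C})$ lies in $SO(8,\mathbb{C})$ which preserves $(\cdot ,\cdot )$ on $\mathfrak{C}_{\mathbb{C}}$, and $G_2(\mathbb{C})\subset SO(7,\mathbb{C})$ preserves $(\cdot ,\cdot )$ on $\operatorname{Im}(\mathfrak{C}_{\mathbb{C}})$, so the actions descend to $S_{\mathbb{C}}^7$ and $S_{\mathbb{C}}^6$ respectively. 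The isotropy subgroups at $e_0$ and $e_1$ are $G_2(\mathbb{C})$ and $SL(3,\mathbb{C})$ by the very definitions recorded in Subsections \ref{subsec:realization-g2} and \ref{subsec:realization-spin7}, so each $\varphi _i$ is well defined, injective, and holomorphic.

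The next step is to show that $\varphi _1$ and $\varphi _2$ are open immersions. The differential of $\varphi _1$ at the base point is the complexification of the differential of the real orbit map $Spin(7)\to S^7$, $g\mapsto ge_0$, which is surjective onto the real tangent space at $e_0$ by the transitivity in Lemma \ref{lem:sphere}(1). After tensoring with $\mathbb{C}$ this differential becomes surjective onto $T_{e_0}S_{\mathbb{C}}^7\cong \mathbb{C}^7$, and since $\dim _{\mathbb{C}}Spin(7,\mathbb{C})/G_2(\mathbb{C})=21-14=7$ matches, it is in fact an isomorphism. By $Spin(7,\mathbb{C})$-equivariance the differential is then an isomorphism at every point, so $\varphi _1$ is a local biholomorphism; combined with injectivity it is an open immersion. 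The same reasoning, using Lemma \ref{lem:sphere}(2) and the dimension count $14-8=6$, shows $\varphi _2$ is an open immersion.

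The remaining and crucial step is surjectivity. The varieties $S_{\mathbb{C}}^7$ and $S_{\mathbb{C}}^6$ are smooth irreducible affine hypersurfaces cut out by nondegenerate quadratic equations, hence connected, so it suffices to prove that the images of $\varphi _1$ and $\varphi _2$ are also closed. Here I would invoke the classical criterion (Matsushima, Borel) that an orbit of a reductive complex algebraic group acting on an affine variety is Zariski closed precisely when the isotropy is reductive. Since $G_2(\mathbb{C})$ and $SL(3,\mathbb{C})$ are complexifications of compact Lie groups they are reductive, so the orbits $Spin(7,\mathbb{C})\cdot e_0\subset \mathfrak{C}_{\mathbb{C}}$ and $G_2(\mathbb{C})\cdot e_1\subset \operatorname{Im}(\mathfrak{C}_{\mathbb{C}})$ are closed in the ambient affine spaces, and therefore closed in the complex spheres. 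Being nonempty, open, and closed subsets of connected varieties, they must fill everything.

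The main obstacle is precisely this last step: an injective local biholomorphism between connected complex manifolds of equal dimension need not be surjective in general (witness $\mathbb{C}^{\times }\hookrightarrow \mathbb{C}$), so the differential computation alone does not suffice. The closedness of the orbit provided by reductivity of the stabilizer is what promotes the open immersion to an isomorphism, and this is where the specific structure of the two pairs of groups really enters the argument.
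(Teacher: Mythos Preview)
Your proof is correct, and in fact more careful than the paper's. The paper's own argument for part~(2) simply records that the orbit map $\iota:G_2(\mathbb{C})/SL(3,\mathbb{C})\to S_{\mathbb{C}}^6$ is injective and that both sides have complex dimension six, and then declares ``This implies $\iota$ is surjective,'' deferring the underlying idea to a reference of Murakami; part~(1) is handled entirely by citation. You, by contrast, explicitly flag that equal dimension plus injectivity is not enough on its own (your $\mathbb{C}^{\times}\hookrightarrow\mathbb{C}$ example), and then close the gap with the Matsushima--Borel criterion: the stabilizers $G_2(\mathbb{C})$ and $SL(3,\mathbb{C})$ are reductive, so the orbits are Zariski-closed in the ambient affine spaces, hence closed in the connected complex spheres, hence everything. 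This is a genuinely different route to surjectivity---the paper leans on a bare dimension count backed by an external reference, whereas you supply a self-contained algebraic-group argument. What your approach buys is rigor and transparency at the cost of importing a nontrivial theorem; what the paper's approach buys is brevity, at the cost of leaving the key step to the literature.
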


\begin{proof}
The isomorphism $S_{\mathbb{C}}^7\simeq Spin(7,\mathbb{C})/G_2(\mathbb{C})$ has been proved in 
\cite[Lemma 2.2]{apam}. 
The idea of the proof is based on \cite[Proposition 2 in Section 3]{murakami}. 
Similarly, one can prove $S_{\mathbb{C}}^6\simeq G_2(\mathbb{C})/SL(3,\mathbb{C})$. 
Let us see it briefly. 

We recall that $SL(3,\mathbb{C})$ is the isotropy subgroup of $G_2(\mathbb{C})$ 
at $e_1\in S_{\mathbb{C}}^6$. 
Then, we have a natural embedding 
\begin{align*}
\iota :G_2(\mathbb{C})/SL(3,\mathbb{C})\to S_{\mathbb{C}}^6,\quad 
gSL(3,\mathbb{C})\mapsto ge_1. 
\end{align*}
In particular, 
this is an injective map. 
On the other hand, the complex dimension of $G_2(\mathbb{C})/SL(3,\mathbb{C})$ equals six, 
which coincides with the that of $S_{\mathbb{C}}^6$. 
This implies $\iota$ is surjective, 
and then we obtain $G_2(\mathbb{C})/SL(3,\mathbb{C})\simeq S_{\mathbb{C}}^6$. 
\end{proof}

We note that both $Spin(7,\mathbb{C})/G_2(\mathbb{C})$ and $G_2(\mathbb{C})/SL(3,\mathbb{C})$ 
are non-symmetric homogeneous spaces. 

\subsection{Transitive actions on unit spheres}
\label{subsec:transitive}

In this subsection, 
we consider a description of a vector space 
under the setting that a compact group acts transitively on the unit sphere. 
This subsection is based on the reference \cite{imrn-irreducible}. 

Let $V$ be a vector space over $\mathbb{R}$ equipped with an inner product $(\cdot ,\cdot )$ 
and $G$ a subgroup of the orthogonal group $O(V,(\cdot ,\cdot ))$. 
Then, we have the following lemma. 
Here, $\mathbb{R}_{\geq 0}$ denotes the set of non-negative real numbers. 

\begin{lemma}[see {\cite[Lemma 5.1]{imrn-irreducible}}]
\label{lem:transitive}
Let $G$ be a subgroup of $O(V,(\cdot ,\cdot ))$ acting transitively on the unit sphere $S(V)$. 
Then, $V$ is written as $V=G\cdot (\mathbb{R}_{\geq 0}v_0)$ for a non-zero element $v_0\in S(V)$. 
\end{lemma}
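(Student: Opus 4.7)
The plan is to exploit the polar-type decomposition $V\setminus \{0\}\simeq S(V)\times \mathbb{R}_{>0}$ and transfer it across $G$ using the transitivity hypothesis. Fix any vector $v_0\in S(V)$; this will be the vector in the conclusion. I need to show that every $v\in V$ belongs to $G\cdot \mathbb{R}v_0$, because the reverse inclusion $G\cdot \mathbb{R}v_0\subset V$ is automatic from $G\subset O(V,(\cdot ,\cdot))$.

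For $v=0$, the claim is trivial since $g\cdot 0\cdot v_0=0$ for any $g\in G$. For $v\neq 0$, set $r:=\sqrt{(v,v)}>0$, so $v/r\in S(V)$. By the transitivity of the $G$-action on $S(V)$, there exists $g\in G$ such that $gv_0=v/r$. Because $G\subset O(V,(\cdot ,\cdot))$, every element of $G$ is a linear map on $V$, hence
\begin{align*}
g(rv_0)=r\cdot gv_0=r\cdot \frac{v}{r}=v,
\end{align*}
which shows $v=g(rv_0)\in G\cdot \mathbb{R}v_0$. Combining the two cases yields $V=G\cdot \mathbb{R}v_0$.

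There is no real obstacle here: the statement is essentially the observation that orbits of a group acting transitively on the unit sphere sweep out, after rescaling, the entire ambient vector space. The only subtlety worth flagging is the use of linearity of elements of $G$, which is why the hypothesis $G\subset O(V,(\cdot ,\cdot))$ (as opposed to merely a group of diffeomorphisms of $S(V)$) is essential.
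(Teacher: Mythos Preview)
Your proof is correct and is exactly the elementary argument the paper has in mind; indeed, the paper declares this lemma ``obvious'' and gives no proof at all, so your polar-decomposition argument simply spells out the omitted reasoning.
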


\begin{proof}
Let $v$ be a non-zero element of $V$. 
We set $r:=\sqrt{(v,v)}>0$. 
Then, $v_1:=v/r$ is an element of $S(V)$. 
Since $G$ acts transitively on $S(V)$, we write $v_1=g\cdot v_0$ for some $g\in G$. 
Thus, we obtain $v=g\cdot rv_0$, which is an element of $G\cdot (\mathbb{R}_{\geq 0}v_0)$. 
\end{proof}

Next, let $G$ act on the complexification $V_{\mathbb{C}}=V+\sqrt{-1}V$ diagonally, 
namely, $g\cdot (v_1+\sqrt{-1}v_2):=gv_1+\sqrt{-1}gv_2$ for $g\in G,v_1,v_2\in V$. 

We denote by $(\mathbb{R}v_0)^{\bot}$ the orthogonal complement of $\mathbb{R}v_0$ in $V$. 
Then, we have: 

\begin{lemma}
\label{lem:vc}
Retain the setting as in Lemma \ref{lem:transitive}. 
Suppose that the isotropy subgroup $G_{v_0}$ of $G$ at $v_0$ 
acts transitively on the unit sphere $S((\mathbb{R}v_0)^{\bot})$. 
Then, $V_{\mathbb{C}}$ is expressed as 
\begin{align*}
V_{\mathbb{C}}=G\cdot (\mathbb{R}_{\geq 0}v_0+\sqrt{-1}(\mathbb{R}v_0\oplus \mathbb{R}w_0)) 
\end{align*}
for an element $w_0\in S((\mathbb{R}v_0)^{\bot})$. 
\end{lemma}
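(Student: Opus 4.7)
The plan is to show that every element $v_1 + \sqrt{-1}v_2 \in V_{\mathbb{C}}$ (with $v_1,v_2 \in V$) can be moved by a suitable element of $G$ into the affine slice $\mathbb{R}v_0 + \sqrt{-1}(\mathbb{R}v_0 \oplus \mathbb{R}w_0)$. The approach is a two-step reduction: first I would use the transitivity of $G$ on the sphere in $V$ (from Lemma~\ref{lem:transitive}) to normalize the real part, and then use the transitivity of $G_{v_0}$ on $S((\mathbb{R}v_0)^{\bot})$ to normalize the imaginary part without disturbing the real part.

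Concretely, I would first dispose of the trivial case $v_1=0$ by invoking $V = G\cdot \mathbb{R}v_0$ directly on $v_2$. Assuming $v_1 \neq 0$, the decomposition $V = G\cdot \mathbb{R}v_0$ furnishes some $g_1 \in G$ with $g_1^{-1}v_1 = tv_0$ for some $t\in \mathbb{R}$. Setting $v_2' := g_1^{-1}v_2$, I would then decompose $v_2' = sv_0 + u$ with $s \in \mathbb{R}$ and $u \in (\mathbb{R}v_0)^{\bot}$, using the orthogonal splitting $V = \mathbb{R}v_0 \oplus (\mathbb{R}v_0)^{\bot}$. Since $G_{v_0} \subset O(V,(\cdot,\cdot))$ fixes $v_0$, it automatically preserves $(\mathbb{R}v_0)^{\bot}$. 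If $u \neq 0$, the hypothesis that $G_{v_0}$ is transitive on $S((\mathbb{R}v_0)^{\bot})$ yields some $h \in G_{v_0}$ with $h^{-1}(u/\|u\|) = w_0$, hence $h^{-1}u = \|u\|\, w_0$.

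Combining the two reductions gives
\begin{align*}
(g_1 h)^{-1}(v_1 + \sqrt{-1}v_2)
= h^{-1}(tv_0) + \sqrt{-1}\, h^{-1}(sv_0 + u)
= tv_0 + \sqrt{-1}(sv_0 + \|u\|w_0),
\end{align*}
which lies in $\mathbb{R}v_0 + \sqrt{-1}(\mathbb{R}v_0 \oplus \mathbb{R}w_0)$, as required. The reverse inclusion is immediate because the slice sits inside $V_{\mathbb{C}}$, which is $G$-stable under the diagonal action.

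I do not expect any serious obstacle: the argument is a clean bootstrapping of two sphere-transitivity hypotheses, and the only point demanding care is the verification that the normalizing element $h$ chosen for the imaginary part indeed fixes the real part $tv_0$, which is exactly why one insists on taking $h$ from $G_{v_0}$ rather than from all of $G$. The degenerate case $v_1=0$ should be mentioned briefly for completeness but presents no real difficulty.
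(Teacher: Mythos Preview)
Your proof is correct and follows essentially the same two-step reduction as the paper: first normalize the real part using transitivity of $G$ on $S(V)$, then normalize the imaginary part using transitivity of $G_{v_0}$ on $S((\mathbb{R}v_0)^{\bot})$, exploiting that $h\in G_{v_0}$ fixes $v_0$. The paper simply packages this elementwise argument into the set-theoretic identities $V=G_{v_0}\cdot(\mathbb{R}v_0\oplus\mathbb{R}w_0)$ and $V_{\mathbb{C}}=G\cdot\mathbb{R}v_0+\sqrt{-1}\,G_{v_0}\cdot(\mathbb{R}v_0\oplus\mathbb{R}w_0)$, but the underlying mechanism is identical.
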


\begin{proof}
Our proof is based on \cite[Lemma 5.2]{imrn-irreducible}. 
Let us see it briefly. 
Applying Lemma \ref{lem:transitive} to the case where $G_{v_0}$ acts transitively 
on $S((\mathbb{R}v_0)^{\bot})$, 
the vector space 
$(\mathbb{R}v_0)^{\bot}$ is written as 
$(\mathbb{R}v_0)^{\bot}=G_{v_0}\cdot \mathbb{R}_{\geq 0}w_0$ 
for an element $w_0\in S((\mathbb{R}v_0)^{\bot})$. 
On the other hand, it is clear that $G_{v_0}\cdot \mathbb{R}v_0=\mathbb{R}v_0$. 
As $V=\mathbb{R}v_0\oplus (\mathbb{R}v_0)^{\bot}$, we obtain 
\begin{align*}
V=G_{v_0}\cdot \mathbb{R}v_0\oplus G_{v_0}\cdot \mathbb{R}_{\geq 0}w_0
=G_{v_0}\cdot (\mathbb{R}v_0\oplus \mathbb{R}_{\geq 0}w_0). 
\end{align*}
Combining the above decomposition with Lemma \ref{lem:transitive}, 
we conclude 
\begin{align*}
V_{\mathbb{C}}=V+\sqrt{-1}V
&=G\cdot \mathbb{R}_{\geq 0}v_0+\sqrt{-1}(G_{v_0}\cdot (\mathbb{R}v_0\oplus \mathbb{R}_{\geq 0}w_0))\\
&=G\cdot (\mathbb{R}_{\geq 0}v_0+\sqrt{-1}(\mathbb{R}v_0\oplus \mathbb{R}_{\geq 0}w_0)). 
\end{align*}
In particular, 
since $\mathbb{R}_{\geq 0}w_0$ is contained in $\mathbb{R}w_0$, 
we obtain 
\begin{align*}
V_{\mathbb{C}}&=G\cdot (\mathbb{R}_{\geq 0}v_0+\sqrt{-1}(\mathbb{R}v_0\oplus \mathbb{R}_{\geq 0}w_0))\\
&\subset G\cdot (\mathbb{R}_{\geq 0}v_0+\sqrt{-1}(\mathbb{R}v_0\oplus \mathbb{R}w_0))
\subset V_{\mathbb{C}}. 
\end{align*}
Hence, Lemma \ref{lem:vc} has been proved. 
\end{proof}


\section{Cartan decomposition for $(G_2(\mathbb{C}),SL(3,\mathbb{C}))$}
\label{sec:g2sl3}

In this section, 
we give a proof of Theorem \ref{thm:main} for a non-symmetric reductive spherical pair 
$(G_2(\mathbb{C}),SL(3,\mathbb{C}))$ (Type R-2). 

We begin this section with the outline of our proof. 
As mentioned in Lemma \ref{lem:complex-sphere}, 
the homogeneous space $G_2(\mathbb{C})/SL(3,\mathbb{C})$ is biholomorphic to the complex unit sphere 
$S(\operatorname{Im}(\mathfrak{C}_{\mathbb{C}}))$. 
Then, we first find a real submanifold $T_1$ 
which meets every $G_2$-orbit in $S(\operatorname{Im}(\mathfrak{C}_{\mathbb{C}}))$ 
(Section \ref{subsec:g2-sphere}). 
Second, 
we give an abelian group $A_1$ such that $T_1$ is an $A_1$-orbit (Section \ref{subsec:g2-orbit}). 
After that, 
we prove Theorem \ref{thm:main} for this case (see Theorem \ref{thm:cartan-g2} for detail). 

\subsection{$G_2$-action on $S(\operatorname{Im}(\mathfrak{C}_{\mathbb{C}}))$}
\label{subsec:g2-sphere}

First, we give a decomposition of $S(\operatorname{Im}(\mathfrak{C}_{\mathbb{C}}))$ into $G_2$-orbits. 

We set 
\begin{align}
\label{eq:t1}
T_1:&=(\mathbb{R}_{\geq 0}e_1+\sqrt{-1}(\mathbb{R}e_1\oplus \mathbb{R}e_2))
	\cap S(\operatorname{Im}(\mathfrak{C}_{\mathbb{C}})). 
\end{align}

\begin{lemma}
\label{lem:g2-sphere}
The complex unit sphere $S(\operatorname{Im}(\mathfrak{C}_{\mathbb{C}}))$ is written as 
\begin{align*}
S(\operatorname{Im}(\mathfrak{C}_{\mathbb{C}}))=G_2\cdot T_1. 
\end{align*}
\end{lemma}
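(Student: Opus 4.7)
The plan is to apply Lemma \ref{lem:vc} directly, taking $V=\operatorname{Im}(\mathfrak{C})$, $G=G_2\subset O(V,(\cdot,\cdot))$, and $v_0=e_1\in S(V)$. The two hypotheses of that lemma are already supplied by the preliminaries: by Lemma \ref{lem:sphere}(2), $G_2$ acts transitively on the unit sphere $S(\operatorname{Im}(\mathfrak{C}))=S^6$; and, since the stabilizer $(G_2)_{e_1}$ is precisely the subgroup $SU(3)$ realized in Section \ref{subsec:realization-g2}, Lemma \ref{lem:sphere}(3) provides the required transitivity of $(G_2)_{e_1}=SU(3)$ on the unit sphere $S(W)=S((\mathbb{R}e_1)^{\perp})=S^5$.

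With these two transitivity statements in hand, Lemma \ref{lem:vc} yields, for any choice of unit vector $w_0\in S(W)$, the decomposition
\begin{align*}
\operatorname{Im}(\mathfrak{C}_{\mathbb{C}})=G_2\cdot \bigl(\mathbb{R}e_1+\sqrt{-1}(\mathbb{R}e_1\oplus \mathbb{R}w_0)\bigr).
\end{align*}
I would simply take $w_0=e_2\in S(W)$, which is the natural pick compatible with the definition (\ref{eq:t1}) of $T_1$.

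To conclude, I would intersect both sides of the above equality with the complex unit sphere $S(\operatorname{Im}(\mathfrak{C}_{\mathbb{C}}))$. Because $G_2\subset SO(8,\mathbb{C})$ preserves the complex symmetric bilinear form $(\cdot,\cdot)$ on $\mathfrak{C}_{\mathbb{C}}$, the $G_2$-action leaves $S(\operatorname{Im}(\mathfrak{C}_{\mathbb{C}}))$ invariant, so the intersection commutes with the $G_2$-action, giving
\begin{align*}
S(\operatorname{Im}(\mathfrak{C}_{\mathbb{C}}))=G_2\cdot\bigl[(\mathbb{R}e_1+\sqrt{-1}(\mathbb{R}e_1\oplus \mathbb{R}e_2))\cap S(\operatorname{Im}(\mathfrak{C}_{\mathbb{C}}))\bigr]=G_2\cdot T_1,
\end{align*}
as desired. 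There is no serious obstacle here; the entire argument is a specialization of the general mechanism of Lemma \ref{lem:vc}, and the only non-trivial input is the classical fact that $SU(3)$ acts transitively on $S^5$, which is already cited from \cite{borel,m-s}.
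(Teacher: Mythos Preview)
Your proposal is correct and is essentially identical to the paper's own proof: both apply Lemma~\ref{lem:vc} with $V=\operatorname{Im}(\mathfrak{C})$, $G=G_2$, $v_0=e_1$, $w_0=e_2$, invoking Lemma~\ref{lem:sphere} for the two transitivity hypotheses, and then intersect with the complex unit sphere using that $G_2\subset SO(7)$ preserves $(\cdot,\cdot)$.
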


\begin{proof}
Retain the notation as in Section \ref{subsec:sphere}. 
We observe that $G_2$ acts transitively on $S(\operatorname{Im}(\mathfrak{C}))$ and 
the isotropy subgroup $(G_2)_{e_1}=SU(3)$ acts transitively on $S(W)$ (see Lemma \ref{lem:sphere}). 
By Lemma \ref{lem:vc}, we have 
\begin{align*}
\operatorname{Im}(\mathfrak{C}_{\mathbb{C}})
=G_2\cdot (\mathbb{R}_{\geq 0}e_1+\sqrt{-1}(\mathbb{R}e_1+\mathbb{R}e_2)). 
\end{align*}
Since $G_2$ is a subgroup of $SO(7)$, we obtain 
\begin{align*}
S(\operatorname{Im}(\mathfrak{C}_{\mathbb{C}}))
&=(G_2\cdot (\mathbb{R}_{\geq 0}e_1+\sqrt{-1}(\mathbb{R}e_1+\mathbb{R}e_2)))
	\cap S(\operatorname{Im}(\mathfrak{C}_{\mathbb{C}}))\\
&=G_2\cdot ((\mathbb{R}_{\geq 0}e_1+\sqrt{-1}(\mathbb{R}e_1+\mathbb{R}e_2)))
	\cap S(\operatorname{Im}(\mathfrak{C}_{\mathbb{C}}))\\
&=G_2\cdot T_1. 
\end{align*}
Hence, Lemma \ref{lem:g2-sphere} has been proved. 
\end{proof}

Next, we consider an explicit description of an element of $T_1$ 
in the coordinates. 
Let $v$ be an element of $T_1$. 
As $T_1\subset \mathbb{R}e_1+\sqrt{-1}(\mathbb{R}e_1+\mathbb{R}e_2)$, 
we write $v=x_1e_1+\sqrt{-1}(y_1e_1+y_2e_2)$ for some $x_1\in \mathbb{R}_{\geq 0}$ 
and some $y_1,y_2\in \mathbb{R}$. 
Then, we have 
\begin{align*}
(v,v)=(x_1+\sqrt{-1}y_1)^2+(\sqrt{-1}y_2)^2=(x_1^2-y_1^2-y_2^2)+2\sqrt{-1}x_1y_1. 
\end{align*}
Since $v\in S(\operatorname{Im}(\mathfrak{C}_{\mathbb{C}}))$, 
three real numbers $x_1,y_1,y_2$ satisfy $x_1^2-y_1^2-y_2^2=1$ and $x_1y_1=0$. 
Hence, we get $y_1=0$ and $x_1^2-y_2^2=1$. 
In particular, $x_1$ has to be a positive number. 
Therefore, $T_1$ is of the form 
\begin{align*}
T_1=\{ (\cosh \theta )e_1+\sqrt{-1}(\sinh \theta )e_2: \theta \in \mathbb{R}\} . 
\end{align*}
Here, the map $\mathbb{R}\to T_1,~\theta \mapsto (\cosh \theta )e_1+\sqrt{-1}(\sinh \theta )e_2$ 
is an embedding. 
Then, $T_1$ is a one-dimensional real submanifold in $S(\operatorname{Im}(\mathfrak{C}_{\mathbb{C}}))$. 

\subsection{$G_2$-action on $G_2(\mathbb{C})/SL(3,\mathbb{C})$}
\label{subsec:g2-orbit}

We recall from Lemma \ref{lem:complex-sphere} that 
$S(\operatorname{Im}(\mathfrak{C}_{\mathbb{C}}))$ is biholomorphic to $G_2(\mathbb{C})/SL(3,\mathbb{C})$. 
As $S(\operatorname{Im}(\mathfrak{C}_{\mathbb{C}}))=G_2\cdot T_1$, 
there exists a real submanifold $S_1$ in $G_{\mathbb{C}}/H_{\mathbb{C}}$ such that 
$T_1\simeq S_1$ 
and $G_{\mathbb{C}}/H_{\mathbb{C}}=G_u\cdot S_1$ 
via the biholomorphic diffeomorphism. 

To find $S_1$, we construct an abelian group $A_1$ as follows. 
Let us define a matrix $\delta _{(x,y)}$ by 
\begin{align*}
\delta _{(x,y)}=\left( 
	\begin{array}{cccc}
	0 & 0 & 0 & -\sqrt{-1}x \\
	0 & 0 & -\sqrt{-1}y & 0 \\
	0 & \sqrt{-1}y & 0 & 0 \\
	\sqrt{-1}x & 0 & 0 & 0 
	\end{array}
\right) . 
\end{align*}
Now, we set 
\begin{align}
\label{eq:a1-algebra}
\mathfrak{a}_1
=\{ \tau _{\theta }:=\operatorname{diag}(\delta _{(0,\theta )},\delta _{(-\theta /2,-\theta /2)}) 
:\theta \in \mathbb{R}\} 
\end{align}
and 
\begin{align}
\label{eq:a1}
A_1=\exp \mathfrak{a}_1
=\{ t_{\theta }=\operatorname{diag}(\exp \delta _{(0,\theta )}, \exp \delta _{(-\theta /2,-\theta /2)})
:\theta \in \mathbb{R}\} . 
\end{align}
Then, $A_1$ is a one-dimensional abelian group. 
We note 
\begin{align*}
\exp \delta _{(x,y)}=\left( 
	\begin{array}{cccc}
	\cosh x & 0 & 0 & -\sqrt{-1}\sinh x \\
	0 & \cosh y & -\sqrt{-1}\sinh y & 0 \\
	0 & \sqrt{-1}\sinh y & \cosh y & 0 \\
	\sqrt{-1}\sinh x & 0 & 0 & \cosh x
	\end{array}
\right) . 
\end{align*}

\begin{lemma}
\label{lem:a1-g2}
The abelian group $A_1$ is contained in $G_2(\mathbb{C})$. 
\end{lemma}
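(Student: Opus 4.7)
The plan is to reduce the assertion $A_1 \subset G_2(\mathbb{C})$ to a statement at the Lie-algebra level. The group $G_2(\mathbb{C}) = \operatorname{Aut}_{\mathbb{C}}(\mathfrak{C}_{\mathbb{C}})$ is a closed Lie subgroup of $SO(8,\mathbb{C})$ whose Lie algebra is the space of derivations
\[
\mathfrak{g}_2(\mathbb{C}) = \{X \in \operatorname{End}_{\mathbb{C}}(\mathfrak{C}_{\mathbb{C}}) : X(xy) = (Xx)y + x(Xy)\ \text{for all}\ x, y \in \mathfrak{C}_{\mathbb{C}}\}.
\]
Since $A_1 = \exp \mathfrak{a}_1$ is the connected one-parameter group generated by $\tau := \tau_1$, it suffices to show $\tau \in \mathfrak{g}_2(\mathbb{C})$.

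First, inspection of $\delta_{(x,y)}$ shows that it equals $\sqrt{-1}$ times a real skew-symmetric matrix, so $\tau \in \sqrt{-1}\,\mathfrak{so}(8) \subset \mathfrak{so}(8,\mathbb{C})$ automatically. For the Leibniz identity, I would read off the action of $\tau$ on the standard basis from the columns of the two $4\times 4$ blocks:
\begin{align*}
\tau e_0 &= \tau e_3 = 0, \quad \tau e_1 = \sqrt{-1}\,e_2, \quad \tau e_2 = -\sqrt{-1}\,e_1,\\
\tau e_4 &= -\tfrac{\sqrt{-1}}{2}e_7,\ \tau e_5 = -\tfrac{\sqrt{-1}}{2}e_6,\ \tau e_6 = \tfrac{\sqrt{-1}}{2}e_5,\ \tau e_7 = \tfrac{\sqrt{-1}}{2}e_4,
\end{align*}
and then verify $\tau(e_ie_j) = (\tau e_i)e_j + e_i(\tau e_j)$ pair by pair using the multiplication table~\eqref{eq:relation} together with the alternative law $a(ab)=a^2 b$ of $\mathfrak{C}_{\mathbb{C}}$ (needed to compute products such as $e_2 e_4 = e_2(e_2 e_6) = -e_6$ that are not listed directly). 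Pairs involving $e_0$ are trivial since $e_0$ is the unit and $\tau e_0 = 0$; the diagonal pairs $(e_i, e_i)$ with $i \geq 1$ are trivial since $\tau(-e_0)=0$ and $(\tau e_i)e_i + e_i(\tau e_i) = 0$ by alternativity; and the antisymmetry $e_ie_j = -e_je_i$ on the imaginary basis halves the remaining work.

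The main obstacle is pure bookkeeping rather than any conceptual difficulty: one goes through the remaining off-diagonal pairs and checks each identity. As a representative sample, for $e_1 e_4 = e_5$ the Leibniz identity reads
\[
(\tau e_1)e_4 + e_1(\tau e_4) = \sqrt{-1}(e_2 e_4) - \tfrac{\sqrt{-1}}{2}(e_1 e_7) = \sqrt{-1}(-e_6) - \tfrac{\sqrt{-1}}{2}(-e_6) = -\tfrac{\sqrt{-1}}{2}e_6 = \tau(e_5),
\]
and the remaining pairs are strictly analogous. The essential point that this calculation reveals is that the specific coefficients $\theta$ in the first block and $-\theta/2$ in the second are calibrated precisely so that the cross-terms balance; this balancing is the real content of Lemma~\ref{lem:a1-g2}.
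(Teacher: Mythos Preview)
Your argument is correct, and it differs from the paper's only in that you work at the Lie-algebra level while the paper works at the group level. The paper verifies directly that each group element $t_\theta$ is an algebra automorphism by checking $(t_\theta e_i)(t_\theta e_j)=t_\theta(e_ie_j)$ for all $i\le j$, using the explicit formulas for $t_\theta e_k$ in terms of $\cosh(\theta),\sinh(\theta),\cosh(\theta/2),\sinh(\theta/2)$. You instead check that the generator $\tau=\tau_1$ is a derivation of $\mathfrak{C}_{\mathbb{C}}$ and then appeal to the standard fact that the exponential of a derivation is an automorphism (equivalently, that $G_2(\mathbb{C})$ is a closed Lie subgroup with Lie algebra $\operatorname{Der}(\mathfrak{C}_{\mathbb{C}})$). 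Your route trades the one-time use of that structural fact for considerably lighter arithmetic: the Leibniz identity is linear in the entries of $\tau$, so no hyperbolic-function identities are needed, and the balancing of the coefficients $\theta$ and $-\theta/2$ shows up transparently. The paper's route is more self-contained in that it never leaves the group. Either way the content is a finite table check, and your sample computation for the pair $(e_1,e_4)$ is correct.
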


\begin{proof}[Sketch of Proof]
Let us verify that any element $t_{\theta }\in A_1$ satisfies 
\begin{align*}
(t_{\theta }e_i)(t_{\theta }e_j)=t_{\theta }(e_ie_j)\quad (0\leq i,j\leq 7)
\end{align*}
for our choice of the $\mathbb{C}$-basis $\{ e_0,\ldots ,e_7\} $ in (\ref{eq:relation}). 
In fact, the computation is straightforward from the followings: 
\begin{align*}
t_{\theta }e_1&=(\cosh \theta )e_1+\sqrt{-1}(\sinh \theta )e_2,\\
t_{\theta }e_2&=-\sqrt{-1}(\sinh \theta )e_1+(\cosh \theta )e_2,\\
t_{\theta }e_4&=(\cosh (\theta /2))e_4-\sqrt{-1}(\sinh (\theta /2))e_7,\\
t_{\theta }e_5&=(\cosh (\theta /2))e_5-\sqrt{-1}(\sinh (\theta /2))e_6,\\
t_{\theta }e_6&=\sqrt{-1}(\sinh (\theta /2))e_5+(\cosh (\theta /2))e_6,\\
t_{\theta }e_7&=\sqrt{-1}(\sinh (\theta /2))e_4+(\cosh (\theta /2))e_7 
\end{align*}
and $t_{\theta }e_i=e_i$ for $i=0,3$. 
\end{proof}

As mentioned in the proof of Lemma \ref{lem:a1-g2}, 
an element $(\cosh \theta )e_1+\sqrt{-1}(\sinh \theta )e_2\in T_1$ is written by $t_{\theta }e_1$. 
Then, the submanifold $T_1$ is expressed as 
\begin{align}
\label{eq:t1-a1}
T_1=\{ t_{\theta }e_1:\theta \in \mathbb{R}\} =A_1\cdot e_1. 
\end{align}
Hence, we set 
\begin{align}
\label{eq:s1}
S_1:=A_1SL(3,\mathbb{C})/SL(3,\mathbb{C}). 
\end{align}
Then, we have: 

\begin{lemma}
\label{lem:t1}
$S_1\simeq T_1$. 
\end{lemma}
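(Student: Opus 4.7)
My plan is to exploit the biholomorphism
$\iota : G_2(\mathbb{C})/SL(3,\mathbb{C}) \to S(\operatorname{Im}(\mathfrak{C}_{\mathbb{C}}))$,
$gSL(3,\mathbb{C}) \mapsto ge_1$, established in Lemma \ref{lem:complex-sphere},
and show that its restriction to $S_1$ lands diffeomorphically onto $T_1$.
Since $\iota$ is a diffeomorphism, it will suffice to verify that $\iota(S_1) = T_1$
and that $\iota|_{S_1}$ is injective as a map of sets;
smoothness of the inverse will come for free by restricting $\iota^{-1}$ to $T_1$.

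First, I will compute the image. For each $\theta \in \mathbb{R}$,
the coset $t_\theta \, SL(3,\mathbb{C}) \in S_1$ is sent by $\iota$ to $t_\theta e_1$,
and by (\ref{eq:t1-a1}) the collection $\{ t_\theta e_1 : \theta \in \mathbb{R}\}$ is precisely $T_1$.
Hence $\iota(S_1) = T_1$, which gives surjectivity of $\iota|_{S_1}$ onto $T_1$.

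Next, for injectivity, I will use that $SL(3,\mathbb{C})$ is the isotropy subgroup of $G_2(\mathbb{C})$ at $e_1$.
Suppose $t_\theta \, SL(3,\mathbb{C}) = t_{\theta'}\, SL(3,\mathbb{C})$ for some
$\theta,\theta' \in \mathbb{R}$. Applying $\iota$, we get $t_\theta e_1 = t_{\theta'} e_1$.
Using the explicit formula
$t_\theta e_1 = (\cosh \theta) e_1 + \sqrt{-1}(\sinh \theta) e_2$
recorded in the proof of Lemma \ref{lem:a1-g2}, together with the fact noted in
Section \ref{subsec:g2-sphere} that
$\theta \mapsto (\cosh \theta) e_1 + \sqrt{-1}(\sinh \theta) e_2$
is an embedding $\mathbb{R} \hookrightarrow T_1$, I conclude $\theta = \theta'$,
and therefore the two cosets agree.

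I do not expect any substantive obstacle: the lemma amounts to transferring the already-established
parametrization $T_1 = A_1 \cdot e_1$ through the biholomorphism $\iota$,
with the only point requiring care being that $A_1 \cap SL(3,\mathbb{C}) = \{I_8\}$,
which is exactly what the injectivity step above verifies via the explicit coordinates on $T_1$.
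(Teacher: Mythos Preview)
Your argument is correct and is precisely the natural elaboration of what the paper leaves implicit: the paper states Lemma~\ref{lem:t1} without proof, relying on the immediately preceding identification $T_1 = A_1\cdot e_1$ in (\ref{eq:t1-a1}) together with the biholomorphism $\iota$ of Lemma~\ref{lem:complex-sphere}, which is exactly the route you take. One minor remark: since $\iota$ is already a global diffeomorphism, the restriction $\iota|_{S_1}$ is automatically injective, so your separate injectivity check is not strictly needed for the conclusion $S_1\simeq T_1$; what it does establish (and what is worth knowing) is the slightly finer fact that $A_1\cap SL(3,\mathbb{C})=\{I_8\}$, so that $\theta\mapsto t_\theta SL(3,\mathbb{C})$ parametrizes $S_1$ bijectively.
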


Combining Lemma \ref{lem:g2-sphere} with Lemma \ref{lem:t1}, 
we get the decomposition of the homogeneous space 
$G_2(\mathbb{C})/SL(3,\mathbb{C})$ as follows: 

\begin{proposition}
\label{prop:g2-orbit}
$G_2(\mathbb{C})/SL(3,\mathbb{C})=G_2\cdot S_1$. 
\end{proposition}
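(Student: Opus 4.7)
The plan is to transfer the orbit decomposition of the complex unit sphere $S(\operatorname{Im}(\mathfrak{C}_{\mathbb{C}}))$ obtained in Lemma \ref{lem:g2-sphere} across the biholomorphism $\iota : G_2(\mathbb{C})/SL(3,\mathbb{C}) \to S_{\mathbb{C}}^6$ established in Lemma \ref{lem:complex-sphere}. The key observation is that $\iota$ is defined by $gSL(3,\mathbb{C}) \mapsto ge_1$, so by construction it intertwines the left $G_2(\mathbb{C})$-action on the homogeneous space with the natural $G_2(\mathbb{C})$-action on the sphere; in particular it is $G_2$-equivariant.

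First I would observe that under $\iota$ the submanifold $S_1 = A_1 SL(3,\mathbb{C})/SL(3,\mathbb{C})$ is carried to $A_1 \cdot e_1$. By the computation (\ref{eq:t1-a1}) we have $A_1 \cdot e_1 = T_1$, so $\iota(S_1) = T_1$. Applying $G_2$ on both sides and using equivariance yields $\iota(G_2 \cdot S_1) = G_2 \cdot T_1$.

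Second, Lemma \ref{lem:g2-sphere} tells us $G_2 \cdot T_1 = S(\operatorname{Im}(\mathfrak{C}_{\mathbb{C}})) = S_{\mathbb{C}}^6$, which is all of the image of $\iota$. Since $\iota$ is a bijection, pulling back gives $G_2 \cdot S_1 = G_2(\mathbb{C})/SL(3,\mathbb{C})$, which is exactly the desired identity.

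There is essentially no obstacle beyond bookkeeping: all the substantive work has already been done in Lemmas \ref{lem:complex-sphere} and \ref{lem:g2-sphere} and in the explicit verification (\ref{eq:t1-a1}) that $T_1 = A_1 \cdot e_1$. The only point requiring a moment of care is the $G_2$-equivariance of $\iota$, but this is immediate from its formula $gSL(3,\mathbb{C}) \mapsto ge_1$ since for $k \in G_2 \subset G_2(\mathbb{C})$ one has $\iota(k \cdot gSL(3,\mathbb{C})) = (kg)e_1 = k \cdot \iota(gSL(3,\mathbb{C}))$.
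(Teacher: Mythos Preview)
Your argument is correct and is essentially the same as the paper's: both transport the decomposition $S_{\mathbb{C}}^6 = G_2\cdot T_1$ of Lemma \ref{lem:g2-sphere} back through the $G_2$-equivariant bijection $\iota$, using (\ref{eq:t1-a1}) to identify $\iota(S_1)=T_1$. The paper simply phrases this as an element-by-element chase (given $g$, write $ge_1=kt_\theta e_1$ and conclude $gSL(3,\mathbb{C})=k\cdot t_\theta SL(3,\mathbb{C})$), whereas you package it as equivariance plus bijectivity; the content is identical.
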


\begin{proof}
Let $g$ be an element of $G_2(\mathbb{C})$. 
By Lemma \ref{lem:g2-sphere}, 
the element $ge_1\in S(\operatorname{Im}(\mathfrak{C}_{\mathbb{C}}))$ is written as 
$ge_1=k\cdot v_1$ for some $k\in G_u$ and $v_1\in T_1$ (see (\ref{eq:t1-a1})). 
Moreover, $v_1$ is given by $v_1=t_{\theta }e_1\in A_1\cdot e_1$ for some $t_{\theta }\in A_1$, 
from which $ge_1=(kt_{\theta })e_1$. 
This means $g^{-1}kt_{\theta }\in (G_2(\mathbb{C}))_{e_1}=SL(3,\mathbb{C})$. 
Hence, we obtain 
$gSL(3,\mathbb{C})=k\cdot t_{\theta }SL(3,\mathbb{C})\in G_2\cdot S_1$. 
\end{proof}

\subsection{Lie algebra $\mathfrak{a}_1$}
\label{subsec:a1}

In this subsection, 
we observe the Lie algebra $\mathfrak{a}_1$. 

Let $\mathfrak{g}=\mathfrak{g}_2(\mathbb{C})$, 
$\mathfrak{h}=\mathfrak{sl}(3,\mathbb{C})$ and $\mathfrak{g}_u=\mathfrak{g}_2$ 
be the Lie algebras of $G_2(\mathbb{C})$, $SL(3,\mathbb{C})$ and $G_2$, respectively. 
The differential automorphism of the Cartan involution $\theta $ of $G_2(\mathbb{C})$ 
(see (\ref{eq:theta})), 
which we use the same letter to denote, is given by 
$\theta (X)=\overline{X}$ $(X\in \mathfrak{g})$. 
Since $\overline{\delta _{(x,y)}}=\delta _{(-x,-y)}=-\delta _{(x,y)}$, 
we have $\theta (\tau _{\theta })=\tau _{(-\theta )}=-\tau _{\theta }$ 
for any $\tau _{\theta }\in \mathfrak{a}_1$. 
Hence, $\mathfrak{a}_1$ is contained in $\sqrt{-1}\mathfrak{g}_u$. 

Next, let $\mathfrak{q}$ be the orthogonal complement of $\mathfrak{h}$ in $\mathfrak{g}$ 
with respect to the Killing form on $\mathfrak{g}$. 
As $SL(3,\mathbb{C})=(G_2(\mathbb{C}))_{e_1}$, 
we write $\mathfrak{h}=\{ X\in \mathfrak{g}_2(\mathbb{C}):Xe_1=0\} $. 
Thus, $\mathfrak{a}_1$ is not contained in $\mathfrak{h}$. 
In fact, we have: 

\begin{lemma}
\label{lem:a1-q}
$\mathfrak{a}_1\subset \mathfrak{q}$. 
\end{lemma}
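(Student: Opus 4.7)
The plan is to verify Killing-orthogonality of $\mathfrak{a}_1$ and $\mathfrak{h}$ by reducing it to an explicit trace computation. Since $\mathfrak{g} = \mathfrak{g}_2(\mathbb{C})$ is simple, every $\operatorname{ad}$-invariant symmetric bilinear form on $\mathfrak{g}$ is a scalar multiple of the Killing form $B$; in particular, via the matrix embedding $\mathfrak{g} \subset \mathfrak{so}(8,\mathbb{C})$, the trace form $(X,Y) \mapsto \operatorname{tr}(XY)$ is a non-zero scalar multiple of $B$. It therefore suffices to show that $\operatorname{tr}(\tau_\theta X) = 0$ for every $\tau_\theta \in \mathfrak{a}_1$ and every $X \in \mathfrak{h}$.

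Next I simplify this trace using the zero patterns on both sides. Any $X \in \mathfrak{h}$ satisfies $Xe_0 = 0$ (since $X \in \mathfrak{g} \subset \mathfrak{so}(\operatorname{Im}(\mathfrak{C}_{\mathbb{C}}),(\cdot,\cdot))$) and $Xe_1 = 0$ (by the description $\mathfrak{h} = \{X \in \mathfrak{g} : Xe_1 = 0\}$); combined with the skew-symmetry of $X$ in $\mathfrak{so}(8,\mathbb{C})$, the matrix of $X$ in the basis $\{e_0,\ldots,e_7\}$ has zero first two rows and zero first two columns. The matrix $\tau_\theta = \operatorname{diag}(\delta_{(0,\theta)},\delta_{(-\theta/2,-\theta/2)})$ is block diagonal, and in the upper block $\delta_{(0,\theta)}$ the only non-zero entries sit at the positions $(e_1,e_2)$ and $(e_2,e_1)$, which are annihilated by the zero rows and columns of $X$. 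Consequently the trace collapses to a sum over indices in $\{4,5,6,7\}$; a direct expansion, together with $X_{ij} = -X_{ji}$, gives
\[
\operatorname{tr}(\tau_\theta X) \;=\; \sqrt{-1}\,\theta\,(X_{74} + X_{65}),
\]
where $X_{ij}$ denotes the $(i,j)$-entry of $X$.

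It remains to show $X_{65} + X_{74} = 0$. Here I use the derivation property $X(uv) = (Xu)v + u(Xv)$, which holds for every $X \in \mathfrak{g}_2(\mathbb{C}) = \operatorname{Der}(\mathfrak{C}_{\mathbb{C}})$ because $G_2(\mathbb{C})$ acts by algebra automorphisms. Applied to the product $e_1e_4 = e_5$ with $Xe_1 = 0$, this yields $Xe_5 = e_1\cdot(Xe_4)$. Expanding $Xe_4 = \sum_k X_{k4}\, e_k$ and reading off the $e_6$-coefficient on both sides via the multiplication rules (\ref{eq:relation}) (in particular $e_1e_7 = -e_6$, which follows from alternativity) gives $X_{65} = -X_{74}$, as required. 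The main obstacle is purely bookkeeping: since both $\tau_\theta$ and the subspace $\mathfrak{h}$ are highly constrained, no deeper structural input is needed beyond the single derivation identity arising from $e_1e_4 = e_5$.
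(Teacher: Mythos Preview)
Your proof is correct and takes a genuinely different route from the paper's. The paper quotes from \cite{yokota} explicit $\mathbb{C}$-bases of $\mathfrak{h}=\mathfrak{sl}(3,\mathbb{C})$ and of its orthogonal complement $\mathfrak{q}$ inside $\mathfrak{g}_2(\mathbb{C})$, and then simply recognizes $\tau_\theta$ as a scalar multiple of the listed basis vector $2X_{12}-X_{47}-X_{56}\in\mathfrak{q}$. You instead work from first principles: identifying the Killing form with the trace form via simplicity of $\mathfrak{g}_2(\mathbb{C})$, using the vanishing of rows and columns $0$ and $1$ for $X\in\mathfrak{h}$ to reduce $\operatorname{tr}(\tau_\theta X)$ to $\sqrt{-1}\,\theta\,(X_{74}+X_{65})$, and then extracting the single relation $X_{65}=-X_{74}$ from the derivation identity applied to $e_1e_4=e_5$. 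The paper's approach yields more information---a full basis of $\mathfrak{q}$, which is reused in the proof of Lemma~\ref{lem:a0-spin-q}---at the cost of an external reference; your approach is self-contained and isolates exactly the one algebraic constraint needed for this lemma.
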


\begin{proof}
We will give a $\mathbb{C}$-bases of $\mathfrak{h}=\mathfrak{sl}(3,\mathbb{C})$ 
and $\mathfrak{q}$ as follows, which is based on \cite[Theorem 1.5.1]{yokota}. 

Let $E_{ij}$ $(0\leq i,j\leq 7,~i\neq j)$ 
be a $\mathbb{C}$-linear transformation on $\mathfrak{C}_{\mathbb{C}}$ 
satisfying $E_{ij}e_j=e_i$ and $E_{ij}e_k=0$ for $k\neq j$. 
We shall also use the same letter $E_{ij}$ to denote the matrix of degree eight 
corresponding to the $\mathbb{C}$-linear transformation $E_{ij}$ 
via the identification $\operatorname{End}_{\mathbb{C}}(\mathfrak{C}_{\mathbb{C}})\simeq M(8,\mathbb{C})$ 
with respect to the $\mathbb{C}$-basis $\{ e_0,\ldots ,e_7\} $. 
For $0\leq i<j\leq 7$, we set $X_{ij}:=E_{ij}-E_{ji}$. 
Then, 
\begin{multline}
\label{eq:sl3}
\{ -X_{23}+X_{45},~-X_{45}+X_{67},~X_{24}+X_{35},~-X_{25}+X_{34},\\
X_{26}+X_{37},~-X_{27}+X_{36},~X_{46}+X_{57},~-X_{47}+X_{56}\} 
\end{multline}
is a $\mathbb{C}$-basis of $\mathfrak{sl}(3,\mathbb{C})$, and 
\begin{multline}
\label{eq:q}
\{ 2X_{12}-X_{47}-X_{56},~2X_{13}-X_{46}+X_{57},~2X_{14}+X_{27}+X_{36},\\
2X_{15}+X_{26}-X_{37},~2X_{16}-X_{25}-X_{34},~2X_{17}-X_{24}+X_{35}\} 
\end{multline}
is a $\mathbb{C}$-basis of $\mathfrak{q}$. 
Since $2X_{12}-X_{47}-X_{56}\in \mathfrak{q}$ is written by $\sqrt{-1}\tau _2=2\sqrt{-1}\tau _1$, 
an arbitrary $\tau _{\theta }$ is of the form 
$\tau _{\theta }=\theta \tau _1\in \mathfrak{q}$. 
Thus, 
the Lie algebra $\mathfrak{a}_1$ is contained in $\mathfrak{q}$. 
\end{proof}

Therefore, we have shown $\mathfrak{a}_1\subset \sqrt{-1}\mathfrak{g}_u\cap \mathfrak{q}$. 

\subsection{Proof of Theorem \ref{thm:main} for $(G_2(\mathbb{C}),SL(3,\mathbb{C}))$}
\label{proof-g2}

A Cartan decomposition for the reductive spherical pair $(G_2(\mathbb{C}),SL(3,\mathbb{C}))$ is provided 
by Proposition \ref{prop:g2-orbit}. 
More precisely, we prove: 

\begin{theorem}[Theorem \ref{thm:main} for Type R-2]
\label{thm:cartan-g2}
Let $(G_{\mathbb{C}},H_{\mathbb{C}})$ be the reductive spherical pair of Type R-2, namely, 
$(G_2(\mathbb{C}),SL(3,\mathbb{C}))$. 
Then, the one-dimensional abelian group $A_1=\exp \mathfrak{a}_1$ 
with $\mathfrak{a}_1\subset \sqrt{-1}\mathfrak{g}_u\cap \mathfrak{q}$ given by (\ref{eq:a1-algebra}) 
satisfies 
\begin{align*}
G_{\mathbb{C}}=G_uA_1H_{\mathbb{C}}. 
\end{align*}
\end{theorem}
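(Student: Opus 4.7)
The plan is to assemble the theorem directly from Proposition \ref{prop:g2-orbit}, which already encodes the essential geometric content $G_2(\mathbb{C})/SL(3,\mathbb{C}) = G_2 \cdot S_1$ with $S_1 = A_1 SL(3,\mathbb{C})/SL(3,\mathbb{C})$. By the end of Section \ref{subsec:a1} all the preparatory ingredients are in place: the abelian group $A_1$ sits inside $G_2(\mathbb{C})$ by Lemma \ref{lem:a1-g2}, its Lie algebra satisfies $\mathfrak{a}_1 \subset \sqrt{-1}\mathfrak{g}_u \cap \mathfrak{q}$ by Lemma \ref{lem:a1-q} together with the short calculation that the Cartan involution sends each $\tau_\theta$ to its negative, and the one-dimensional slice $T_1 \subset S_{\mathbb{C}}^6$ is realized as $A_1 \cdot e_1$ under the identification $G_2(\mathbb{C})/SL(3,\mathbb{C}) \simeq S_{\mathbb{C}}^6$ furnished by Lemma \ref{lem:complex-sphere}.

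Given this setup, the argument is essentially formal. Fix $g \in G_{\mathbb{C}} = G_2(\mathbb{C})$ and consider the coset $gH_{\mathbb{C}} \in G_2(\mathbb{C})/SL(3,\mathbb{C})$. By Proposition \ref{prop:g2-orbit} there exist $k \in G_u = G_2$ and $t_\theta \in A_1$ such that $gH_{\mathbb{C}} = k\, t_\theta\, H_{\mathbb{C}}$; unwinding this coset equality produces an element $h \in H_{\mathbb{C}}$ with $g = k\, t_\theta\, h$, which shows $G_{\mathbb{C}} \subset G_u A_1 H_{\mathbb{C}}$. The reverse inclusion is automatic from $A_1 \subset G_{\mathbb{C}}$, giving the equality $G_{\mathbb{C}} = G_u A_1 H_{\mathbb{C}}$. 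Combined with the location $\mathfrak{a}_1 \subset \sqrt{-1}\mathfrak{g}_u \cap \mathfrak{q}$, this is precisely the assertion of Theorem \ref{thm:cartan-g2}.

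The genuine work does not live in this final assembly step but in the ingredients feeding into Proposition \ref{prop:g2-orbit}. The two substantive points are (i) the verification that the one-parameter family $t_\theta = \operatorname{diag}(\exp\delta_{(0,\theta)}, \exp\delta_{(-\theta/2,-\theta/2)})$ really respects the octonion multiplication, which requires checking the relations (\ref{eq:relation}) term by term against the explicit action on $e_1,\dots,e_7$ recorded in the proof of Lemma \ref{lem:a1-g2}; and (ii) combining the transitivity statements of Lemma \ref{lem:sphere} with the product-of-orbits decomposition of Lemma \ref{lem:vc} to cut the complex sphere $S_{\mathbb{C}}^6$ down to the one-real-dimensional slice $T_1$, after which solving $x_1^2 - y_1^2 - y_2^2 = 1$ and $x_1 y_1 = 0$ forces the hyperbolic parametrization $(\cosh\theta)e_1 + \sqrt{-1}(\sinh\theta)e_2$ that matches $A_1 \cdot e_1$. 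Once these two verifications are accepted, Theorem \ref{thm:cartan-g2} follows in a few lines as above.
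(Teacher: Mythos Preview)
Your proof is correct and follows essentially the same route as the paper: both invoke Proposition~\ref{prop:g2-orbit} to write an arbitrary coset $gH_{\mathbb{C}}$ as $k\,t_{\theta}H_{\mathbb{C}}$ with $k\in G_u$ and $t_{\theta}\in A_1$, then unwind the coset equality to produce $h\in H_{\mathbb{C}}$ with $g=k\,t_{\theta}h$, giving $G_{\mathbb{C}}\subset G_uA_1H_{\mathbb{C}}$ and hence equality. Your additional commentary on where the real work lies (Lemmas~\ref{lem:a1-g2}, \ref{lem:vc}, \ref{lem:sphere} and the parametrization of $T_1$) accurately reflects the paper's structure.
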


\begin{proof}
Let $g $ be an element of $G_2(\mathbb{C})$. 
By Proposition \ref{prop:g2-orbit}, 
there exists $k\in G_u=G_2$ and $t_{\theta }\in A_1$ such that 
$gH_{\mathbb{C}}=k\cdot (t_{\theta }H_{\mathbb{C}})=(kt_{\theta })H_{\mathbb{C}}$. 
Thus, we have $(kt_{\theta })^{-1}g\in H_{\mathbb{C}}$. 
We write $h:=(kt_{\theta })^{-1}g\in H_{\mathbb{C}}$. 
Then, we obtain $g=kt_{\theta }h\in G_uA_1H_{\mathbb{C}}$. 
Hence, we conclude $G_{\mathbb{C}}\subset G_uA_1H_{\mathbb{C}}$. 
Clearly, $G_{\mathbb{C}}\supset G_uA_1H_{\mathbb{C}}$.
Therefore, we get 
$G_{\mathbb{C}}=G_uA_1H_{\mathbb{C}}$. 
\end{proof}


\section{Cartan decomposition for $(Spin(7,\mathbb{C}),G_2(\mathbb{C}))$}
\label{sec:spin7g2}

In this subsection, 
we give a proof of Theorem \ref{thm:main} for 
the non-symmetric reductive spherical pair $(Spin(7,\mathbb{C}),G_2(\mathbb{C}))$. 
The proof of Theorem \ref{thm:main} for Type R-1$'$ proceeds in 
parallel with that for Type R-2 which has been discussed in Section \ref{sec:g2sl3}. 

\subsection{$Spin(7)$-action on $S(\mathfrak{C}_{\mathbb{C}})$}
\label{subsec:spin7-sphere}

In this subsection, we consider the $Spin(7)$-action 
on the complex unit sphere $S(\mathfrak{C}_{\mathbb{C}})$ (Type R-1$'$). 

The $Spin(7)$-action on $S^7=S(\mathfrak{C})$ is transitive 
and the action of the isotropy subgroup $Spin(7)_{e_0}
=G_2$ on $S^6=S(\operatorname{Im}(\mathfrak{C}))$ is also transitive (see Lemma \ref{lem:sphere}). 
It follows from Lemma \ref{lem:vc} that the complexified Cayley algebra 
$\mathfrak{C}_{\mathbb{C}}=\mathfrak{C}+\sqrt{-1}\mathfrak{C}$ is written as 
\begin{align*}
\mathfrak{C}_{\mathbb{C}}
=Spin(7)\cdot (\mathbb{R}_{\geq 0}e_0+\sqrt{-1}(\mathbb{R}e_0+\mathbb{R}e_1)). 
\end{align*}
Then, we obtain 
\begin{align*}
S(\mathfrak{C}_{\mathbb{C}})
&=Spin(7)\cdot 
(\mathbb{R}_{\geq 0}e_0+\sqrt{-1}(\mathbb{R}e_0+\mathbb{R}e_1))\cap S(\mathfrak{C}_{\mathbb{C}}). 
\end{align*}

Hence, we take a one-dimensional real submanifold $\widetilde{T}_0$ in $S(\mathfrak{C}_{\mathbb{C}})$ 
as 
\begin{align}
\label{eq:t0}
\widetilde{T}_0:
&=(\mathbb{R}_{\geq 0}e_0+\sqrt{-1}(\mathbb{R}e_0+\mathbb{R}e_1))\cap S(\mathfrak{C}_{\mathbb{C}}). 
\end{align}
Then, we have: 

\begin{lemma}
\label{lem:spin7-sphere}
The complex unit sphere $S(\mathfrak{C}_{\mathbb{C}})$ is expressed as 
\begin{align*}
S(\mathfrak{C}_{\mathbb{C}})=Spin(7)\cdot \widetilde{T}_0. 
\end{align*}
\end{lemma}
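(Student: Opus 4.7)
The plan is to mimic exactly the argument used to prove Lemma \ref{lem:g2-sphere}, but one rank higher: the roles of $G_2$, $SU(3)$, $e_1$, $e_2$ get replaced by $Spin(7)$, $G_2$, $e_0$, $e_1$. The backbone of the argument is Lemma \ref{lem:vc}, which manufactures a two-real-dimensional affine slice in the complexification out of the data of two nested transitive sphere actions.

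First, I would check the hypotheses of Lemma \ref{lem:vc} in the present situation. Take $V=\mathfrak{C}$ with its real inner product $(\cdot,\cdot)$ and let $G=Spin(7)$, which lies in $SO(8)=SO(\mathfrak{C},(\cdot,\cdot))$ by construction. By Lemma \ref{lem:sphere}(1), $Spin(7)$ acts transitively on $S(\mathfrak{C})=S^{7}$, so I may take $v_{0}=e_{0}$. The isotropy subgroup at $e_{0}$ equals $G_{2}$ (from the description of $B_{3}(\mathbb{C})$ and the relation $Spin(7)_{e_{0}}=G_{2}$), and by Lemma \ref{lem:sphere}(2) this $G_{2}$ acts transitively on $S(\operatorname{Im}(\mathfrak{C}))=S((\mathbb{R}e_{0})^{\perp})=S^{6}$. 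So I may take $w_{0}=e_{1}\in S((\mathbb{R}e_{0})^{\perp})$. Lemma \ref{lem:vc} then yields
\begin{align*}
\mathfrak{C}_{\mathbb{C}}=Spin(7)\cdot (\mathbb{R}e_{0}+\sqrt{-1}(\mathbb{R}e_{0}\oplus \mathbb{R}e_{1})).
\end{align*}

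Second, I would intersect with $S(\mathfrak{C}_{\mathbb{C}})$. The crucial point is that the $Spin(7)$-action on $\mathfrak{C}_{\mathbb{C}}$ (extended diagonally) preserves the complex-bilinear form $(\cdot,\cdot)$ on $\mathfrak{C}_{\mathbb{C}}$: elements of $Spin(7)\subset SO(8)$ are real orthogonal matrices, and their complex-linear extensions lie in $SO(8,\mathbb{C})$, which by definition preserves $(\cdot,\cdot)$. Hence $Spin(7)$ permutes $S(\mathfrak{C}_{\mathbb{C}})=\{v:(v,v)=1\}$, and intersection commutes with the $Spin(7)$-action:
\begin{align*}
S(\mathfrak{C}_{\mathbb{C}})
&=\bigl(Spin(7)\cdot (\mathbb{R}e_{0}+\sqrt{-1}(\mathbb{R}e_{0}\oplus \mathbb{R}e_{1}))\bigr)\cap S(\mathfrak{C}_{\mathbb{C}})\\
&=Spin(7)\cdot \bigl((\mathbb{R}e_{0}+\sqrt{-1}(\mathbb{R}e_{0}\oplus \mathbb{R}e_{1}))\cap S(\mathfrak{C}_{\mathbb{C}})\bigr)\\
&=Spin(7)\cdot \widetilde{T}_{0},
\end{align*}
which is what we want.

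There is no real obstacle here, since the statement is structurally parallel to Lemma \ref{lem:g2-sphere} and all ingredients (the two transitivity statements and the fact that $Spin(7)$ sits inside $SO(8,\mathbb{C})$) are already established. The only subtle point worth flagging in the write-up is the verification that the $Spin(7)$-action on $\mathfrak{C}_{\mathbb{C}}$ leaves $S(\mathfrak{C}_{\mathbb{C}})$ invariant, which is what legitimises pulling the intersection with $S(\mathfrak{C}_{\mathbb{C}})$ inside the $Spin(7)$-orbit in the last display. After the lemma, one would presumably parametrise $\widetilde{T}_{0}$ explicitly by $(\cosh\theta)e_{0}+\sqrt{-1}(\sinh\theta)e_{1}$, in exact analogy with the description of $T_{1}$ at the end of Section \ref{subsec:g2-sphere}.
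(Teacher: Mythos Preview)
Your proof is correct and follows essentially the same approach as the paper: the paper establishes the lemma by citing Lemma~\ref{lem:sphere} for the two nested transitive sphere actions, invoking Lemma~\ref{lem:vc} to obtain $\mathfrak{C}_{\mathbb{C}}=Spin(7)\cdot(\mathbb{R}e_0+\sqrt{-1}(\mathbb{R}e_0+\mathbb{R}e_1))$, and then intersecting with $S(\mathfrak{C}_{\mathbb{C}})$ using $Spin(7)\subset SO(8)$. Your write-up is in fact slightly more careful than the paper's in making explicit why the intersection can be pulled inside the $Spin(7)$-orbit.
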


\subsection{$Spin(7)$-action on $Spin(7,\mathbb{C})/G_2(\mathbb{C})$}
\label{subsec:spin7-action}

In this subsection, we give a real submanifold 
which meets every $Spin(7)$-orbit in $Spin(7,\mathbb{C})/G_2(\mathbb{C})$. 

Let us take a matrix $d_{\theta }$ as 
\begin{align}
\label{eq:d}
d_{\theta }:=\left( 
	\begin{array}{cc}
	\cosh \theta  & -\sqrt{-1}\sinh \theta \\
	\sqrt{-1}\sinh \theta & \cosh \theta 
	\end{array}
\right) . 
\end{align}
We define a subgroup $\widetilde{A}_0$ of $SO(8,\mathbb{C})$ by 
\begin{align}
\label{eq:a0spin}
\widetilde{A}_0:=\{ \widetilde{a}_{\theta }=
	\operatorname{diag}(d_{\theta },d_{(-\theta /3)},d_{(-\theta /3)},d_{(-\theta /3)})
	:\theta \in \mathbb{R}\} 
\end{align}

\begin{lemma}
\label{lem:a0-spin}
The set $\widetilde{A}_0$ is a subgroup of $Spin(7,\mathbb{C})$. 
\end{lemma}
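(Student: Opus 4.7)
The plan is to construct, for each $\theta\in\mathbb{R}$, a companion element $g_0(\theta)\in SO(7,\mathbb{C})$ satisfying the defining condition of $B_3(\mathbb{C})$, namely $(g_0(\theta)x)(\widetilde{a}_\theta y)=\widetilde{a}_\theta(xy)$ for all $x,y\in\mathfrak{C}_{\mathbb{C}}$. The preliminary checks are routine: each $d_\phi$ satisfies ${}^t d_\phi d_\phi=I_2$ and $\det d_\phi=\cosh^2\phi-\sinh^2\phi=1$, so $\widetilde{a}_\theta\in SO(8,\mathbb{C})$; and block-diagonal multiplication gives $\widetilde{a}_\theta\widetilde{a}_{\theta'}=\widetilde{a}_{\theta+\theta'}$, so $\widetilde{A}_0$ is automatically abelian and closed under inverses. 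Hence once the triality condition is established, the membership $\widetilde{A}_0\subset B_3(\mathbb{C})=Spin(7,\mathbb{C})$ will follow.

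To guess $g_0(\theta)$, I would substitute $y=e_0$ in the triality relation. Writing $v_0:=\widetilde{a}_\theta e_0=(\cosh\theta)e_0+\sqrt{-1}(\sinh\theta)e_1$, this forces $(g_0(\theta)x)v_0=\widetilde{a}_\theta x$. A short direct check gives $v_0^{*}v_0=v_0 v_0^{*}=e_0$, where $v_0^{*}:=(\cosh\theta)e_0-\sqrt{-1}(\sinh\theta)e_1$ denotes the Cayley conjugate of $v_0$. Since the Cayley algebra is alternative, the subalgebra generated by any two elements is associative, and so my candidate is $g_0(\theta)x:=(\widetilde{a}_\theta x)v_0^{*}$. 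Evaluating on the basis $\{e_0,\ldots,e_7\}$ using (\ref{eq:relation}), I expect $g_0(\theta)e_0=e_0$, $g_0(\theta)e_1=e_1$, and $g_0(\theta)$ acting on each of the pairs $(e_2,e_3),(e_4,e_5),(e_6,e_7)$ by the matrix $d_{2\theta/3}$; in particular, $g_0(\theta)\in SO(7,\mathbb{C})$.

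The remaining step is to verify the triality identity $(g_0(\theta)e_i)(\widetilde{a}_\theta e_j)=\widetilde{a}_\theta(e_ie_j)$ for all $0\leq i,j\leq 7$. The cases $i=0$ and $j=0$ are immediate: the former from $g_0(\theta)e_0=e_0$, and the latter from the alternative-algebra identity $(x\cdot v_0^{*})v_0=x(v_0^{*}v_0)=x$. For $1\leq i<j\leq 7$, antisymmetry $e_ie_j=-e_je_i$ cuts the case count in half, and each individual check reduces to a hyperbolic-function identity such as $\cosh\theta\cosh(\theta/3)-\sinh\theta\sinh(\theta/3)=\cosh(2\theta/3)$.

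The hard part is clerical rather than conceptual: the case analysis is tedious but transparent, and each case succeeds precisely because the coefficient $-\theta/3$ appearing in (\ref{eq:a0spin}) is calibrated to make the relevant hyperbolic identities hold simultaneously across all basis pairs. The argument parallels the sketch of proof of Lemma~\ref{lem:a1-g2}, with the extra step of constructing $g_0(\theta)$ explicitly since, unlike the case of $G_2(\mathbb{C})$, $Spin(7,\mathbb{C})=B_3(\mathbb{C})$ is not cut out by the stronger condition $(gx)(gy)=g(xy)$.
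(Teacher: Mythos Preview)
Your proof is correct and follows the paper's approach exactly: both exhibit the companion element $g_0(\theta)=a_{2\theta/3}=\operatorname{diag}(I_2,d_{2\theta/3},d_{2\theta/3},d_{2\theta/3})\in SO(7,\mathbb{C})$ and then verify the defining relation $(a_{2\theta/3}e_i)(\widetilde{a}_\theta e_j)=\widetilde{a}_\theta(e_ie_j)$ on the basis of $\mathfrak{C}_{\mathbb{C}}$. Your derivation of $g_0(\theta)$ via $x\mapsto(\widetilde{a}_\theta x)v_0^{*}$ is a pleasant addition---the paper simply posits $a_{2\theta/3}$ without motivation---though note that your antisymmetry shortcut is not quite right as stated, since $g_0$ and $\widetilde{a}_\theta$ act on different factors; the paper likewise restricts to $i\le j$ in its sketch, so this is the same level of informality.
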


For the verification of Lemma \ref{lem:a0-spin}, 
we prepare the notation as follows: 
\begin{align}
\label{eq:a-theta}
a_{x}:=\operatorname{diag}(I_2,d_{x},d_{x},d_{x})\in SO(7,\mathbb{C}).
\end{align}

\begin{proof}[Sketch of Proof]
For an element $\widetilde{a}_{\theta }$ of $\widetilde{A}_0$, 
we take $a_{(2\theta /3)}\in SO(7,\mathbb{C})$. 
Then, the direct computation shows that 
\begin{align}
\label{eq:spin7}
(a_{(2\theta /3)}e_i)(\widetilde{a}_{\theta }e_j)=\widetilde{a}_{\theta }(e_ie_j) 
\quad (0\leq i,j\leq 7) 
\end{align}
for the $\mathbb{C}$-basis $\{ e_0,\ldots ,e_7\} $ of $\mathfrak{C}_{\mathbb{C}}$. 
This implies that $\widetilde{a}_{\theta }\in Spin(7,\mathbb{C})$. 
\end{proof}

By taking the same argument as for $T_1$, 
the real submanifold $\widetilde{T}_0$ is of the form 
\begin{align*}
\widetilde{T}_0=\{ (\cosh \theta )e_0+\sqrt{-1}(\sinh \theta )e_1:\theta \in \mathbb{R}\} . 
\end{align*}
Thus, we write 
\begin{align*}
\widetilde{T}_0=\widetilde{A}_0\cdot e_0. 
\end{align*}
Hence, we set 
\begin{align}
\label{eq:s0-spin7}
\widetilde{S}_0:=\widetilde{A}_0G_2(\mathbb{C})/G_2(\mathbb{C}). 
\end{align}

\begin{lemma}
\label{lem:t0}
$\widetilde{S}_0\simeq \widetilde{T}_0$. 
\end{lemma}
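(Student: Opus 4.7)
The plan is to use the biholomorphism $\Phi: Spin(7,\mathbb{C})/G_2(\mathbb{C}) \xrightarrow{\sim} S(\mathfrak{C}_{\mathbb{C}})$ given by $gG_2(\mathbb{C}) \mapsto g\cdot e_0$ (established in Lemma \ref{lem:complex-sphere}(1)) and check that its restriction to $\widetilde{S}_0$ gives $\widetilde{T}_0$ bijectively. Since $G_2(\mathbb{C})$ is precisely the isotropy subgroup of $Spin(7,\mathbb{C})$ at $e_0$, one has $\Phi(\widetilde{S}_0)=\widetilde{A}_0\cdot e_0 = \widetilde{T}_0$ by the very definitions (\ref{eq:a0spin}) and (\ref{eq:s0-spin7}) and the identification $\widetilde{T}_0=\widetilde{A}_0\cdot e_0$ observed immediately before the statement.

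The bulk of the argument is then injectivity of $\Phi|_{\widetilde{S}_0}$. Given $\widetilde{a}_{\theta},\widetilde{a}_{\theta'}\in \widetilde{A}_0$ with $\widetilde{a}_{\theta}G_2(\mathbb{C})\neq \widetilde{a}_{\theta'}G_2(\mathbb{C})$, I would show $\widetilde{a}_{\theta}\cdot e_0\neq \widetilde{a}_{\theta'}\cdot e_0$ directly from the explicit formula
\begin{align*}
\widetilde{a}_{\theta}\cdot e_0 = (\cosh\theta)e_0+\sqrt{-1}(\sinh\theta)e_1,
\end{align*}
which comes from the block $d_{\theta}$ acting on the $(e_0,e_1)$-plane in (\ref{eq:d})--(\ref{eq:a0spin}). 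Since the map $\theta\mapsto (\cosh\theta,\sinh\theta)$ is injective, distinct parameters yield distinct images, so $\Phi|_{\widetilde{S}_0}$ is injective. Together with $\Phi(\widetilde{S}_0)=\widetilde{T}_0$, this makes $\Phi|_{\widetilde{S}_0}\colon \widetilde{S}_0\to\widetilde{T}_0$ a bijection.

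Finally, because $\Phi$ is a biholomorphism of complex manifolds, its restriction to the smooth one-dimensional real submanifold $\widetilde{S}_0$ is smooth with smooth inverse onto its image $\widetilde{T}_0$. Thus $\widetilde{S}_0\simeq \widetilde{T}_0$ as real one-dimensional submanifolds, completing the proof.

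The only mild obstacle is confirming that $\widetilde{S}_0$ is genuinely one-dimensional (i.e. $\widetilde{A}_0\cap G_2(\mathbb{C})=\{I_8\}$); but this follows at once from the injectivity established above, since any nontrivial element of $\widetilde{A}_0\cap G_2(\mathbb{C})$ would fix $e_0$, contradicting $\widetilde{a}_\theta\cdot e_0\neq e_0$ for $\theta\neq 0$. Everything else is formal, so the argument reduces to the explicit action of $\widetilde{a}_\theta$ on $e_0$.
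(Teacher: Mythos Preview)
Your proof is correct and follows exactly the approach the paper intends: the paper states Lemma~\ref{lem:t0} without proof, treating it as an immediate consequence of the biholomorphism in Lemma~\ref{lem:complex-sphere}(1) together with the identification $\widetilde{T}_0=\widetilde{A}_0\cdot e_0$ recorded just before (\ref{eq:s0-spin7}). Your explicit verification of injectivity via $\widetilde{a}_\theta\cdot e_0=(\cosh\theta)e_0+\sqrt{-1}(\sinh\theta)e_1$ simply fills in the details the paper leaves to the reader.
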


Therefore, we get a decomposition of $Spin(7,\mathbb{C})/G_2(\mathbb{C})$ as follows: 

\begin{proposition}
\label{prop:spin7-orbit}
$Spin(7,\mathbb{C})/G_2(\mathbb{C})=Spin(7)\cdot \widetilde{S}_0$. 
\end{proposition}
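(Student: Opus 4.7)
The plan is to mirror, almost verbatim, the proof of Proposition \ref{prop:g2-orbit}, exploiting the fact that all the analogous ingredients have already been set up in Sections \ref{subsec:spin7-sphere} and \ref{subsec:spin7-action}: namely, the biholomorphism $Spin(7,\mathbb{C})/G_2(\mathbb{C})\simeq S(\mathfrak{C}_{\mathbb{C}})$ from Lemma \ref{lem:complex-sphere}(1), the orbit decomposition $S(\mathfrak{C}_{\mathbb{C}})=Spin(7)\cdot \widetilde{T}_0$ from Lemma \ref{lem:spin7-sphere}, and the identification $\widetilde{T}_0=\widetilde{A}_0\cdot e_0$ together with $\widetilde{A}_0\subset Spin(7,\mathbb{C})$ from Lemma \ref{lem:a0-spin}.

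First I would pick an arbitrary element $g\in Spin(7,\mathbb{C})$ and consider its action on the basepoint $e_0\in S(\mathfrak{C}_{\mathbb{C}})$, which is admissible because $G_2(\mathbb{C})$ is precisely the isotropy subgroup of $Spin(7,\mathbb{C})$ at $e_0$ (by the defining formulas in Section \ref{subsec:realization-g2}). Next, I would apply Lemma \ref{lem:spin7-sphere} to express $g\cdot e_0 = k\cdot v$ for some $k\in Spin(7)$ and some $v\in \widetilde{T}_0$. Since $\widetilde{T}_0=\widetilde{A}_0\cdot e_0$, we may further write $v=\widetilde{a}_\theta e_0$ for some $\widetilde{a}_\theta\in \widetilde{A}_0$, whence $g\cdot e_0=(k\widetilde{a}_\theta)\cdot e_0$.

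From this equality the element $(k\widetilde{a}_\theta)^{-1}g$ stabilises $e_0$ and therefore lies in $G_2(\mathbb{C})$, so that $gG_2(\mathbb{C})=k\widetilde{a}_\theta G_2(\mathbb{C})\in Spin(7)\cdot \widetilde{S}_0$, with $\widetilde{S}_0$ as in \eqref{eq:s0-spin7}. This gives the inclusion $Spin(7,\mathbb{C})/G_2(\mathbb{C})\subset Spin(7)\cdot \widetilde{S}_0$, and the opposite inclusion is immediate since $\widetilde{A}_0\subset Spin(7,\mathbb{C})$ and $Spin(7)\subset Spin(7,\mathbb{C})$.

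Since every nontrivial ingredient has already been established in the excerpt (in particular the verification that $\widetilde{A}_0\subset Spin(7,\mathbb{C})$ via the triality identity \eqref{eq:spin7}, which was the only substantive computation), there is no genuinely hard step left; the argument is essentially a bookkeeping translation from $S(\mathfrak{C}_{\mathbb{C}})$ to the homogeneous space $Spin(7,\mathbb{C})/G_2(\mathbb{C})$. The only point requiring a little care is to make sure that the lift from the sphere picture to the coset picture is justified, which is handled precisely by the fact that $G_2(\mathbb{C})$ is the stabiliser of $e_0$ in $Spin(7,\mathbb{C})$; this is why the whole Type R-1$'$ argument runs in perfect parallel to the Type R-2 argument rather than, say, the Type R-1 argument, which will need additional work later in Section \ref{sec:so7g2}.
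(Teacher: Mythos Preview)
Your proposal is correct and follows precisely the approach the paper intends: the paper states Proposition \ref{prop:spin7-orbit} without an explicit proof, having announced at the outset of Section \ref{sec:spin7g2} that the argument runs in parallel to Type R-2, and your write-up is exactly the translation of the proof of Proposition \ref{prop:g2-orbit} with $(Spin(7,\mathbb{C}),G_2(\mathbb{C}),e_0,\widetilde{A}_0,\widetilde{T}_0,\widetilde{S}_0)$ in place of $(G_2(\mathbb{C}),SL(3,\mathbb{C}),e_1,A_1,T_1,S_1)$.
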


\subsection{Lie algebra of $\widetilde{A}_0$}
\label{subsec:a0-spin}
Let $\mathfrak{g}=\mathfrak{spin}(7,\mathbb{C})$, $\mathfrak{h}=\mathfrak{g}_2(\mathbb{C})$ 
and $\mathfrak{g}_u=\mathfrak{spin}(7)$ be the Lie algebras of 
$Spin(7,\mathbb{C})$, $G_2(\mathbb{C})$ and $Spin(7)$, respectively, and 
$\mathfrak{q}$ be the orthogonal complement of $\mathfrak{h}$ in $\mathfrak{g}$ 
with respect to the Killing form on $\mathfrak{g}$. 
In this subsection, 
the Lie algebra $\widetilde{\mathfrak{a}}_0$ of $\widetilde{A}_0$ is contained 
in $\sqrt{-1}\mathfrak{g}_u\cap \mathfrak{q}$. 

The Lie algebra $\widetilde{\mathfrak{a}}_0$ is given as follows. 
Let $\delta _{\theta }$ be a matrix given by 
\begin{align}
\label{eq:delta}
\delta _{\theta }:=\left( 
	\begin{array}{cc}
	0 & -\sqrt{-1}\theta \\
	\sqrt{-1}\theta & 0
	\end{array}
\right) . 
\end{align}
Then, $\widetilde{\mathfrak{a}}_0$ is given by 
\begin{align}
\label{eq:a0-tilde-algebra}
\widetilde{\mathfrak{a}}_0
	&=\{ \widetilde{\alpha }_{\theta }=\operatorname{diag}
	(\delta _{\theta },\delta _{(-\theta /3)},\delta _{(-\theta /3)},\delta _{(-\theta /3)}): 
	\theta \in \mathbb{R}\} . 
\end{align}
In particular, we have $\widetilde{A}_0=\exp \widetilde{\mathfrak{a}}_0$. 

We choose a Cartan involution $\theta $ of $\mathfrak{g}$ given by $\theta (X)=\overline{X}$ 
($X\in \mathfrak{g}$) (see Section \ref{subsec:cartan involution}). 
Then, we have 
$\theta (\widetilde{\alpha }_{\theta })=\overline{(\widetilde{\alpha }_{\theta })}
=-\widetilde{\alpha }_{\theta }$ 
for any $\widetilde{\alpha }_{\theta }\in \widetilde{\mathfrak{a}}_0$ 
because $\overline{\delta _{\theta }}=\delta _{(-\theta )}=-\delta _{\theta }$. 
This implies $\widetilde{\mathfrak{a}}_0\subset \sqrt{-1}\mathfrak{g}_u$. 

Next, we show: 

\begin{lemma}
\label{lem:a0-spin-q}
$\widetilde{\mathfrak{a}}_0\subset \mathfrak{q}$. 
\end{lemma}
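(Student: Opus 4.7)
The plan is to mirror the strategy of Lemma~\ref{lem:a1-q}: expand $\widetilde{\alpha}_\theta$ in the standard skew-symmetric generators $X_{ij} := E_{ij} - E_{ji}$ of $\mathfrak{so}(8,\mathbb{C})$, reduce Killing-orthogonality to the vanishing of an ordinary matrix trace, and finally verify this vanishing against the explicit $\mathbb{C}$-basis of $\mathfrak{g}_2(\mathbb{C})$ already produced in Section~\ref{sec:g2sl3}.

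First, a direct computation from (\ref{eq:delta}) and (\ref{eq:a0-tilde-algebra}) yields
\[
\widetilde{\alpha}_\theta \;=\; \frac{\sqrt{-1}\,\theta}{3}\bigl(-3X_{01} + X_{23} + X_{45} + X_{67}\bigr),
\]
so it suffices to prove that the single element $Y := -3X_{01} + X_{23} + X_{45} + X_{67}$ lies in $\mathfrak{q}$. Since $\mathfrak{spin}(7,\mathbb{C})$ is simple and embeds faithfully into $\mathfrak{so}(8,\mathbb{C})$ via the spin representation, its Killing form is a nonzero scalar multiple of the restricted trace form $(X,Y) \mapsto \operatorname{tr}(XY)$; consequently $Y \in \mathfrak{q}$ is equivalent to $\operatorname{tr}(Y\cdot X) = 0$ for every $X \in \mathfrak{g}_2(\mathbb{C})$. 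Writing $X(i,j)$ for the $(i,j)$-entry of $X$ and using the skew-symmetry $X(j,i) = -X(i,j)$, one finds
\[
\operatorname{tr}(Y \cdot X) \;=\; 6\,X(0,1) \;-\; 2\bigl[X(2,3) + X(4,5) + X(6,7)\bigr].
\]

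Second, I recall from Section~\ref{sec:g2sl3} that $\mathfrak{g}_2(\mathbb{C})$ carries the $\mathbb{C}$-basis consisting of the eight vectors in (\ref{eq:sl3}) together with the six vectors in (\ref{eq:q}); since all fourteen basis elements have indices drawn from $\{1,2,\dots,7\}$, the entry $X(0,1)$ vanishes automatically. It therefore remains to verify $X(2,3) + X(4,5) + X(6,7) = 0$ on each of the fourteen basis vectors: the only nontrivial contributions come from $-X_{23} + X_{45}$ and $-X_{45} + X_{67}$ in (\ref{eq:sl3}), giving the manifestly cancelling sums $(-1)+1+0$ and $0+(-1)+1$, while all twelve remaining basis elements have vanishing $(2,3)$-, $(4,5)$-, and $(6,7)$-entries. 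This bookkeeping is the only real content of the argument, and I expect no conceptual obstacle; the trace-form reduction and its proportionality with the Killing form are standard.
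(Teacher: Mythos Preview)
Your argument is correct and follows essentially the same route as the paper's sketch: both check orthogonality of $\widetilde{\mathfrak{a}}_0$ against the explicit fourteen-element $\mathbb{C}$-basis of $\mathfrak{g}_2(\mathbb{C})$ assembled from (\ref{eq:sl3}) and (\ref{eq:q}). Your version is simply more explicit---writing out $\widetilde{\alpha}_\theta$ as a combination of the $X_{ij}$, reducing the Killing form to the ambient trace form via simplicity of $\mathfrak{spin}(7,\mathbb{C})$, and then carrying out the entrywise bookkeeping---whereas the paper leaves these computations to the reader.
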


\begin{proof}[Sketch of Proof]
As mentioned in Section \ref{subsec:a1}, 
the Lie algebra $\mathfrak{g}_2(\mathbb{C})$ is decomposed into 
the direct sum of $\mathfrak{sl}(3,\mathbb{C})$ 
and the orthogonal complement of $\mathfrak{sl}(3,\mathbb{C})$ in $\mathfrak{g}_2(\mathbb{C})$, 
denoted here by $\mathfrak{q}'$. 
Moreover, 
we take a $\mathbb{C}$-basis of $\mathfrak{sl}(3,\mathbb{C})$ as in (\ref{eq:sl3}) and 
that of $\mathfrak{q}'$ as in (\ref{eq:q}). 
It turns out that $\widetilde{\mathfrak{a}}_0$ is orthogonal to both 
$\mathfrak{sl}(3,\mathbb{C})$ and $\mathfrak{q}'$, 
and then to $\mathfrak{g}_2(\mathbb{C})$. 
Hence, we obtain $\widetilde{\mathfrak{a}}_0\subset \mathfrak{q}$. 
\end{proof}

Consequently, 
we have proved $\widetilde{\mathfrak{a}}_0\subset \sqrt{-1}\mathfrak{g}_u\cap \mathfrak{q}$. 

\subsection{Proof of Theorem \ref{thm:main} for $(Spin(7,\mathbb{C}),G_2(\mathbb{C}))$}
\label{subsec:proof-spin7}

In this subsection, we will prove Theorem \ref{thm:main} for 
$(Spin(7,\mathbb{C}),G_2(\mathbb{C}))$. 
Let ${G}_u:=Spin(7)$ be a maximal compact subgroup of ${G}_{\mathbb{C}}$. 

\begin{theorem}[Theorem \ref{thm:main} for Type R-1$'$]
\label{thm:cartan-spin7}
Let $(G_{\mathbb{C}},H_{\mathbb{C}})$ be the reductive spherical pair of Type R-1$'$, 
namely, $(Spin(7,\mathbb{C}),G_2(\mathbb{C}))$. 
Then, the one-dimensional abelian group $\widetilde{A}_0=\exp \widetilde{\mathfrak{a}}_0$ 
with $\widetilde{\mathfrak{a}}_0\subset \sqrt{-1}\mathfrak{g}_u\cap \mathfrak{q}$ 
given by (\ref{eq:a0-tilde-algebra}) satisfies 
\begin{align*}
{G}_{\mathbb{C}}={G}_u\widetilde{A}_0H_{\mathbb{C}}. 
\end{align*}
\end{theorem}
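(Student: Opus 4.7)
The plan is to follow verbatim the blueprint established for Type R-2 in Theorem \ref{thm:cartan-g2}, since the underlying structure is completely parallel. The one-dimensional abelian subspace $\widetilde{\mathfrak{a}}_0$ has already been shown in Section \ref{subsec:a0-spin} to satisfy $\widetilde{\mathfrak{a}}_0 \subset \sqrt{-1}\mathfrak{g}_u \cap \mathfrak{q}$, so only the group-level factorization $G_{\mathbb{C}} = G_u \widetilde{A}_0 H_{\mathbb{C}}$ needs to be extracted, and the heavy lifting for this has been done by Proposition \ref{prop:spin7-orbit}.

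First, I would pick an arbitrary $g \in G_{\mathbb{C}} = Spin(7,\mathbb{C})$ and examine the coset $gH_{\mathbb{C}}$ in $Spin(7,\mathbb{C})/G_2(\mathbb{C})$. By Proposition \ref{prop:spin7-orbit} together with the definition $\widetilde{S}_0 = \widetilde{A}_0 G_2(\mathbb{C})/G_2(\mathbb{C})$ recorded in \eqref{eq:s0-spin7}, there exist $k \in G_u = Spin(7)$ and $\widetilde{a}_\theta \in \widetilde{A}_0$ such that
\[
gH_{\mathbb{C}} = k \cdot \widetilde{a}_\theta H_{\mathbb{C}} = (k \widetilde{a}_\theta) H_{\mathbb{C}}.
\]
Setting $h := (k \widetilde{a}_\theta)^{-1} g$, we have $h \in H_{\mathbb{C}}$, and therefore $g = k \widetilde{a}_\theta h \in G_u \widetilde{A}_0 H_{\mathbb{C}}$, which proves $G_{\mathbb{C}} \subset G_u \widetilde{A}_0 H_{\mathbb{C}}$. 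The reverse inclusion is immediate from $\widetilde{A}_0 \subset G_{\mathbb{C}}$ (Lemma \ref{lem:a0-spin}), completing the argument.

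There is no real obstacle at this final stage: all the genuinely non-trivial inputs—namely, verifying $\widetilde{A}_0 \subset Spin(7,\mathbb{C})$ through the triality relation \eqref{eq:spin7} (Lemma \ref{lem:a0-spin}), identifying $\widetilde{T}_0 = \widetilde{A}_0 \cdot e_0$, and deducing the sphere decomposition (Lemma \ref{lem:spin7-sphere})—have already been carried out in the earlier subsections. The conversion from ``$G_u$-orbits exhaust the complex sphere $S(\mathfrak{C}_{\mathbb{C}})$'' to ``$G_u$-orbits exhaust the homogeneous space $G_{\mathbb{C}}/H_{\mathbb{C}}$'' works cleanly because $G_2(\mathbb{C})$ is precisely the isotropy subgroup of $Spin(7,\mathbb{C})$ at $e_0$ under the biholomorphism $Spin(7,\mathbb{C})/G_2(\mathbb{C}) \simeq S(\mathfrak{C}_{\mathbb{C}})$ of Lemma \ref{lem:complex-sphere}, so the passage from the orbit decomposition to the Cartan decomposition is purely formal and proceeds by the same three-line argument used in the proof of Theorem \ref{thm:cartan-g2}.
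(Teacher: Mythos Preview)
Your proposal is correct and follows exactly the approach the paper intends: the paper's own proof simply states that the argument is the same as the proof of Theorem~\ref{thm:cartan-g2} and omits the details, and you have written out precisely that parallel argument using Proposition~\ref{prop:spin7-orbit} and the definition of $\widetilde{S}_0$ in place of Proposition~\ref{prop:g2-orbit} and $S_1$.
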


\begin{proof}
The proof of Theorem \ref{thm:cartan-spin7} is the same as the proof of Theorem \ref{thm:cartan-g2}. 
Then, we omit its proof. 
\end{proof}


\section{Cartan decomposition for $(SO(7,\mathbb{C}),G_2(\mathbb{C}))$}
\label{sec:so7g2}

In this section, 
we give a Cartan decomposition for the non-symmetric reductive spherical pair 
$(G_{\mathbb{C}},H_{\mathbb{C}})=(SO(7,\mathbb{C}),G_2(\mathbb{C}))$ (Type R-1). 
The key idea is to take the image of our Cartan decomposition for $(Spin(7,\mathbb{C}),G_2(\mathbb{C}))$ 
given by Theorem \ref{thm:cartan-spin7} through 
the double covering group homomorphism $\pi$ (see (\ref{eq:covering})). 

To carry out, 
we define a subgroup $A_0$ of $SO(7,\mathbb{C})=(SO(8,\mathbb{C}))_{e_0}$ by 
\begin{align}
\label{eq:a0}
A_0:=\{ a_{\theta }=\operatorname{diag}(I_2,d_{\theta },d_{\theta },d_{\theta }):\theta \in \mathbb{R}\} .
\end{align}
Here, an element $a_{\theta }$ has already appeared in the proof of Lemma \ref{lem:a0-spin} 
and $d_{\theta }$ is given by (\ref{eq:d}). 
Then, the Lie algebra $\mathfrak{a}_0$ of $A_0$ is of the form 
\begin{align}
\label{eq:a0-algebra}
\mathfrak{a}_0&=\{ \alpha _{\theta }
=\operatorname{diag}(O_2,\delta _{\theta },\delta _{\theta },\delta _{\theta }): 
	\theta \in \mathbb{R}\} 
\end{align}
where $\delta _{\theta}$ is given by (\ref{eq:delta}). 
In particular, we have $A_0=\exp \mathfrak{a}_0$. 

Let $\mathfrak{g}=\mathfrak{so}(7,\mathbb{C})$, $\mathfrak{h}=\mathfrak{g}_2(\mathbb{C})$ and 
$\mathfrak{g}_u=\mathfrak{so}(7)$ be the Lie algebras of $G_{\mathbb{C}}$, $H_{\mathbb{C}}$ and $G_u$, 
respectively 
and $\mathfrak{q}$ the orthogonal complement of $\mathfrak{h}$ in $\mathfrak{g}$. 
Clearly, $\mathfrak{a}_0$ is contained in $\sqrt{-1}\mathfrak{g}_u$. 
By the same argument as Lemma \ref{lem:a0-spin-q}, we have $\mathfrak{a}_0\subset \mathfrak{q}$. 
Thus, we obtain $\mathfrak{a}_0\subset \sqrt{-1}\mathfrak{g}_u\cap \mathfrak{q}$. 

Now, we return to the relation (\ref{eq:spin7}). 
This implies that $\pi$ induces the map $\pi :\widetilde{A}_0\to A_0$ given by 
$\pi (\widetilde{a}_{\theta })=a_{(2\theta /3)}$. 

\begin{lemma}
\label{lem:a0}
The image $\pi (\widetilde{A}_0)$ coincides with $A_0$. 
\end{lemma}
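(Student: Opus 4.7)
The plan is to read off the image of $\pi$ restricted to $\widetilde{A}_0$ directly from the identity (\ref{eq:spin7}), which was the very relation used to place $\widetilde{a}_\theta$ inside $Spin(7,\mathbb{C})=B_3(\mathbb{C})$. Recall that the map $\pi : B_3(\mathbb{C}) \to SO(7,\mathbb{C})$ is characterized by the uniqueness in the principle of triality: for $g \in B_3(\mathbb{C})$, the element $\pi(g) = g_0 \in SO(7,\mathbb{C})$ is the unique element satisfying $(g_0 x)(gy) = g(xy)$ for all $x,y \in \mathfrak{C}_{\mathbb{C}}$.

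First, I would apply this uniqueness to $\widetilde{a}_\theta \in \widetilde{A}_0$. The proof of Lemma \ref{lem:a0-spin} exhibits exactly such a companion element, namely $a_{(2\theta/3)} \in SO(7,\mathbb{C})$, satisfying $(a_{(2\theta/3)} e_i)(\widetilde{a}_\theta e_j) = \widetilde{a}_\theta (e_i e_j)$ for every pair of basis elements $e_i, e_j$. By $\mathbb{C}$-bilinearity of the octonion product, this extends to all $x,y \in \mathfrak{C}_{\mathbb{C}}$, and hence by the uniqueness part of triality
\begin{align*}
\pi(\widetilde{a}_\theta) = a_{(2\theta/3)}.
\end{align*}

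Next, I would observe that the reparametrization $\theta \mapsto 2\theta/3$ is a bijection of $\mathbb{R}$ onto itself, so as $\theta$ sweeps out all of $\mathbb{R}$ the parameter $s := 2\theta/3$ also sweeps out all of $\mathbb{R}$. Therefore
\begin{align*}
\pi(\widetilde{A}_0) = \{ a_{(2\theta/3)} : \theta \in \mathbb{R}\} = \{ a_s : s \in \mathbb{R}\} = A_0,
\end{align*}
which is the desired equality.

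There is essentially no obstacle here: all the work has already been done in verifying (\ref{eq:spin7}) inside Lemma \ref{lem:a0-spin}. The only point that deserves a brief mention is that the companion element provided by triality is unique, so one really does have $\pi(\widetilde{a}_\theta) = a_{(2\theta/3)}$ on the nose rather than merely up to a sign; once that is noted, the identification of the image is immediate from the surjectivity of $\theta \mapsto 2\theta/3$.
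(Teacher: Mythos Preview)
Your proof is correct and follows essentially the same route as the paper: the paper first observes (just before the lemma) that relation (\ref{eq:spin7}) gives $\pi(\widetilde{a}_\theta)=a_{(2\theta/3)}$, and then in the proof shows surjectivity by exhibiting the explicit preimage $\widetilde{a}_{(3\theta'/2)}$ of $a_{\theta'}$, which is just the inverse of your reparametrization argument.
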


\begin{proof}
For any $a_{\theta '}\in A_0$, 
the element $\widetilde{a}_{(3\theta '/2)}\in \widetilde{A}_0$ satisfies 
$\pi (\widetilde{a}_{(3\theta '/2)})=a_{\theta '}$. 
Hence, we have proved $\pi (\widetilde{A}_0)=A_0$. 
\end{proof}

\begin{theorem}[Theorem \ref{thm:main} for Type R-1]
\label{thm:cartan-so7}
Let $(G_{\mathbb{C}},H_{\mathbb{C}})$ be the reductive spherical pair of Type R-1, 
namely, $(SO(7,\mathbb{C}),G_2(\mathbb{C}))$. 
Then, the one-dimensional abelian group $A_0=\exp \mathfrak{a}_0$ 
with $\mathfrak{a}_0\subset \sqrt{-1}\mathfrak{g}_u\cap \mathfrak{q}$ given by (\ref{eq:a0-algebra}) 
satisfies 
\begin{align*}
G_{\mathbb{C}}=G_uA_0H_{\mathbb{C}}. 
\end{align*}
\end{theorem}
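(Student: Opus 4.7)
The plan is to deduce the decomposition for $(SO(7,\mathbb{C}),G_2(\mathbb{C}))$ by pushing the already-established Cartan decomposition for $(Spin(7,\mathbb{C}),G_2(\mathbb{C}))$ from Theorem \ref{thm:cartan-spin7} through the double covering homomorphism $\pi:Spin(7,\mathbb{C})\to SO(7,\mathbb{C})$ of (\ref{eq:covering}). All the preparatory pieces are already available: the containment $\mathfrak{a}_0\subset \sqrt{-1}\mathfrak{g}_u\cap \mathfrak{q}$ has just been recorded in the paragraph preceding the theorem, and Lemma \ref{lem:a0} identifies $\pi(\widetilde{A}_0)$ with $A_0$.

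First I would take an arbitrary $g'\in SO(7,\mathbb{C})=G_{\mathbb{C}}$. Since $\pi$ is surjective, lift it to some $g\in Spin(7,\mathbb{C})$. Applying Theorem \ref{thm:cartan-spin7} to $g$, I would write $g=k\,\widetilde{a}_{\theta }\,h$ with $k\in Spin(7)$, $\widetilde{a}_{\theta }\in \widetilde{A}_0$, and $h\in G_2(\mathbb{C})$. Then $g'=\pi(g)=\pi(k)\,\pi(\widetilde{a}_{\theta })\,\pi(h)$.

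Next I would identify each of the three factors on the right. By Lemma \ref{lem:a0}, $\pi(\widetilde{a}_{\theta })=a_{(2\theta /3)}\in A_0$. For the $G_2(\mathbb{C})$-factor, observe that if $h\in G_2(\mathbb{C})$ then $h(xy)=(hx)(hy)$ holds, so the uniqueness clause in the principle of triality forces $\pi(h)=h$; thus $\pi$ restricts to the identity map on $G_2(\mathbb{C})$, which sits inside $SO(7,\mathbb{C})$ as its usual subgroup $H_{\mathbb{C}}$. Finally, since $Spin(7)$ is the preimage of $SO(7)$ under $\pi$ and $\pi$ is surjective, $\pi(k)\in SO(7)=G_u$. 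Combining these three facts gives $g'\in G_u A_0 H_{\mathbb{C}}$, and the reverse inclusion $G_u A_0 H_{\mathbb{C}}\subset G_{\mathbb{C}}$ is immediate.

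There is no genuine obstacle here; the work was done in Section \ref{sec:spin7g2} and in Lemma \ref{lem:a0}. The only point one must be a little careful about is that $\pi$ acts as the identity on the $G_2(\mathbb{C})$-factor rather than nontrivially (so the $H_{\mathbb{C}}$ on the two sides of $\pi$ can be legitimately identified), but this is a direct consequence of the definition of $B_3(\mathbb{C})$ together with the uniqueness of $g_0$ supplied by triality.
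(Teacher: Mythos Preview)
Your argument is correct and follows essentially the same route as the paper: both push the decomposition of Theorem \ref{thm:cartan-spin7} through the covering $\pi$, using Lemma \ref{lem:a0} for the abelian factor and the identifications $\pi(Spin(7))=SO(7)$, $\pi(G_2(\mathbb{C}))=G_2(\mathbb{C})$. Your version is phrased element-wise and supplies the extra remark that $\pi$ is the identity on $G_2(\mathbb{C})$ by triality-uniqueness, which the paper leaves implicit; otherwise the proofs coincide.
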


\begin{proof}
We observe that $G_{\mathbb{C}}=SO(7,\mathbb{C})$ is realized 
as the image $\pi (\widetilde{G}_{\mathbb{C}})$ 
of $\widetilde{G}_{\mathbb{C}}=Spin(7,\mathbb{C})$. 
It follows from Theorem \ref{thm:cartan-spin7} that 
\begin{align}
\label{eq:image}
\pi (\widetilde{G}_{\mathbb{C}})=\pi (\widetilde{G}_u\widetilde{A}_0H_{\mathbb{C}})
=\pi (\widetilde{G}_u)\pi (\widetilde{A}_0)\pi (H_{\mathbb{C}}). 
\end{align}
Here, the image $\pi (\widetilde{G}_u)=\pi (Spin(7))$ coincides with $SO(7)$, 
$\pi (H_{\mathbb{C}})=\pi (G_2(\mathbb{C}))$ with $G_2(\mathbb{C})$, 
and by Lemma \ref{lem:a0} $\pi (\widetilde{A}_0)$ with $A_0$. 
Hence, (\ref{eq:image}) implies $G_{\mathbb{C}}=G_uA_0H_{\mathbb{C}}$. 
\end{proof}

The following theorem is an immediate consequence of Theorem \ref{thm:main} or 
Proposition \ref{prop:spin7-orbit}. 

\begin{proposition}
\label{prop:so7-orbit}
$SO(7,\mathbb{C})/G_2(\mathbb{C})=SO(7)\cdot (A_0G_2(\mathbb{C})/G_2(\mathbb{C}))$. 
\end{proposition}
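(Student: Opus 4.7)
The plan is to derive this decomposition directly from Theorem \ref{thm:cartan-so7}, which gives the group-level Cartan decomposition $SO(7,\mathbb{C}) = SO(7) \cdot A_0 \cdot G_2(\mathbb{C})$. Quotienting both sides by $G_2(\mathbb{C})$ on the right should produce the stated orbit decomposition of the homogeneous space essentially for free.

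Concretely, for the inclusion $\subseteq$, I would take an arbitrary coset $g G_2(\mathbb{C})$ with $g \in SO(7,\mathbb{C})$ and apply Theorem \ref{thm:cartan-so7} to write $g = k a h$ with $k \in SO(7)$, $a \in A_0$, and $h \in G_2(\mathbb{C})$. Then $g G_2(\mathbb{C}) = kah G_2(\mathbb{C}) = ka G_2(\mathbb{C}) = k \cdot (aG_2(\mathbb{C}))$, which lies in $SO(7) \cdot (A_0 G_2(\mathbb{C})/G_2(\mathbb{C}))$. The reverse inclusion is immediate because both $SO(7)$ and $A_0$ sit inside $SO(7,\mathbb{C})$, so every element of $SO(7) \cdot (A_0 G_2(\mathbb{C})/G_2(\mathbb{C}))$ is of the form $k a G_2(\mathbb{C})$ with $ka \in SO(7,\mathbb{C})$.

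An alternative route, matching the paper's phrasing, is to push Proposition \ref{prop:spin7-orbit} through the double covering $\pi : Spin(7,\mathbb{C}) \to SO(7,\mathbb{C})$ from (\ref{eq:covering}). Starting from $Spin(7,\mathbb{C})/G_2(\mathbb{C}) = Spin(7) \cdot \widetilde{S}_0$ with $\widetilde{S}_0 = \widetilde{A}_0 G_2(\mathbb{C})/G_2(\mathbb{C})$, I would apply $\pi$ together with the identities $\pi(Spin(7)) = SO(7)$, $\pi(G_2(\mathbb{C})) = G_2(\mathbb{C})$, and $\pi(\widetilde{A}_0) = A_0$ (the last of these is Lemma \ref{lem:a0}) to obtain exactly the claimed decomposition of $SO(7,\mathbb{C})/G_2(\mathbb{C})$.

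There is no genuine obstacle: the proposition is essentially a reformulation of Theorem \ref{thm:cartan-so7} at the level of the homogeneous space, and the only real step is the observation that the right factor $H_{\mathbb{C}}$ in $G_{\mathbb{C}} = G_u A_0 H_{\mathbb{C}}$ is absorbed upon passing to the quotient $G_{\mathbb{C}}/H_{\mathbb{C}}$. I would therefore expect the write-up to be only a few lines.
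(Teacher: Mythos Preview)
Your proposal is correct and matches the paper exactly: the paper states that the proposition is an immediate consequence of Theorem \ref{thm:main} (i.e., Theorem \ref{thm:cartan-so7}) or of Proposition \ref{prop:spin7-orbit}, and then spells out only the second route via the covering $\pi$ using $\pi(Spin(7))=SO(7)$, $\pi(G_2(\mathbb{C}))=G_2(\mathbb{C})$, and $\pi(\widetilde{A}_0)=A_0$ from Lemma \ref{lem:a0}. Your two routes are precisely these two, so there is nothing to add.
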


For the sake of our application given in the next section, 
we will explain that Proposition \ref{prop:so7-orbit} follows from Proposition \ref{prop:spin7-orbit}. 

The double covering group homomorphism $\pi$ induces 
a double covering map 
\begin{align}
\label{eq:covering-space}
\widetilde{\pi }:Spin(7,\mathbb{C})/G_2(\mathbb{C})\to SO(7,\mathbb{C})/G_2(\mathbb{C}),\quad 
gG_2(\mathbb{C})\mapsto \pi (g)G_2(\mathbb{C}). 
\end{align}
In particular, $\widetilde{\pi}(Spin(7,\mathbb{C})/G_2(\mathbb{C}))$ 
coincides with $SO(7,\mathbb{C})/G_2(\mathbb{C})$. 
It follows from Proposition \ref{prop:spin7-orbit} that 
\begin{align*}
SO(7,\mathbb{C})/G_2(\mathbb{C})
&=\widetilde{\pi}(Spin(7,\mathbb{C})/G_2(\mathbb{C}))\\
&=\widetilde{\pi}(Spin(7)\cdot (\widetilde{A}_0G_2(\mathbb{C})/G_2(\mathbb{C})))\\
&=\pi (Spin(7))\cdot (\pi (\widetilde{A}_0)G_2(\mathbb{C})/G_2(\mathbb{C}))\\
&=SO(7)\cdot (A_0G_2(\mathbb{C})/G_2(\mathbb{C})). 
\end{align*}

Hence, we set 
\begin{align}
\label{eq:s0}
S_0:=A_0G_2(\mathbb{C})/G_2(\mathbb{C}). 
\end{align}
Then, the above argument shows: 

\begin{corollary}
\label{cor:s0}
$\pi (\widetilde{S}_0)=S_0$. 
\end{corollary}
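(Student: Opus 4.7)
The plan is to prove Corollary \ref{cor:s0} by a short direct computation on cosets, using Lemma \ref{lem:a0} as the key input. The statement should be read as $\widetilde{\pi}(\widetilde{S}_0)=S_0$, where $\widetilde{\pi}$ is the double covering map defined in (\ref{eq:covering-space}); equivalently, the image of $\widetilde{A}_0G_2(\mathbb{C})$ under $\pi$, projected to $SO(7,\mathbb{C})/G_2(\mathbb{C})$, equals $A_0G_2(\mathbb{C})/G_2(\mathbb{C})$.

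First I would prove the inclusion $\widetilde{\pi}(\widetilde{S}_0)\subseteq S_0$. A general element of $\widetilde{S}_0$ has the form $\widetilde{a}_\theta G_2(\mathbb{C})$ for some $\widetilde{a}_\theta\in\widetilde{A}_0$. Applying $\widetilde{\pi}$ and using the formula $\pi(\widetilde{a}_\theta)=a_{2\theta/3}\in A_0$ (from the proof of Lemma \ref{lem:a0-spin} and the definition of $\widetilde{\pi}$), one gets $\widetilde{\pi}(\widetilde{a}_\theta G_2(\mathbb{C}))=a_{2\theta/3}G_2(\mathbb{C})\in S_0$.

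For the reverse inclusion $S_0\subseteq \widetilde{\pi}(\widetilde{S}_0)$, I would invoke the surjectivity half of Lemma \ref{lem:a0}: for any $a_{\theta'}\in A_0$ we have $a_{\theta'}=\pi(\widetilde{a}_{3\theta'/2})$ with $\widetilde{a}_{3\theta'/2}\in\widetilde{A}_0$, so $a_{\theta'}G_2(\mathbb{C})=\widetilde{\pi}(\widetilde{a}_{3\theta'/2}G_2(\mathbb{C}))\in\widetilde{\pi}(\widetilde{S}_0)$. Combining the two inclusions gives the corollary.

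There is no real obstacle here: the corollary is essentially a bookkeeping consequence of Lemma \ref{lem:a0} once the map $\widetilde{\pi}$ is used correctly. The only point that deserves a line of explanation is the convention that $\pi$ appearing in the statement of Corollary \ref{cor:s0} is shorthand for the induced map $\widetilde{\pi}$ on homogeneous spaces, which is well-defined precisely because $\pi$ is a group homomorphism satisfying $\pi(G_2(\mathbb{C}))=G_2(\mathbb{C})$.
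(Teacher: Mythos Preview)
Your argument is correct and matches the paper's approach: the corollary is read via the induced map $\widetilde{\pi}$ and follows immediately from Lemma~\ref{lem:a0}, exactly as in the displayed computation preceding Corollary~\ref{cor:s0}. Your explicit two-inclusion write-up is just a slightly more detailed version of the paper's one-line identification $\pi(\widetilde{A}_0)G_2(\mathbb{C})/G_2(\mathbb{C})=A_0G_2(\mathbb{C})/G_2(\mathbb{C})$.
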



\section{Application to visible actions on complex manifolds}
\label{sec:visible}

The motivation of the study for a Cartan decomposition for non-symmetric reductive spherical pairs 
is to investigate a classification problem on strongly visible actions 
on reductive complex homogeneous spaces. 
The notion of (strongly) visible actions has been introduced by T. Kobayashi 
for giving an unified explanation for multiplicity-freeness of representations (cf. \cite{mftheorem}). 
In this aspect, 
it plays a crucial role to find a real submanifold which meets every orbit. 
Once one can find a Cartan decomposition for a reductive spherical pair, 
one can also provide an explicit description of such a submanifold simultaneously. 
This section studies spherical homogeneous spaces of rank-one type 
from the viewpoint of (strongly) visible actions. 

Let us give a quick review on strongly visible actions. 
A holomorphic action of a Lie group $G$ on a connected complex manifold $D$ is called 
{\it strongly visible} if there exist a real submanifold $S$ in $D$ (called a {\it slice}) and 
an anti-holomorphic diffeomorphism $\sigma$ on $D$ satisfying the following conditions 
(see \cite{mftheorem}): 
\begin{gather}
\tag{V.1} \label{v1}
D=G\cdot S, \\
\tag{S.1} \label{s1}
\sigma |_{S}=\operatorname{id}_S, \\
\tag{S.2} \label{s2}
\sigma (x)\in G\cdot x\ (\forall x\in D). 
\end{gather}

We note that the slice $S$ is automatically totally real, 
namely, $J_x(T_xS)\cap T_xS=\{ 0\} $ for any $x\in S$ (see \cite[Remark 3.3.2]{mftheorem}). 
Here, $J$ stands for the complex structure of $D$. 

In Kobayashi's original definition \cite[Definition 3.3.1]{mftheorem} of strongly visible actions, 
it allows a complex manifold $D$ containing a non-empty $G$-invariant open set 
satisfying (\ref{v1})--(\ref{s2}). 
For this paper, 
we shall adopt the above definition for simplicity. 

Now, we prove: 

\begin{theorem}
\label{thm:visible-slice}
Let $(G_{\mathbb{C}},H_{\mathbb{C}})$ be a non-symmetric reductive spherical pair of rank-one type. 
Then, the $G_u$-action on $D=G_{\mathbb{C}}/H_{\mathbb{C}}$ is strongly visible. 
In particular, 
one can find a one-dimensional slice $S$ for the strongly visible action. 
\end{theorem}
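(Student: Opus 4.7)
The plan is to extract the slice directly from Theorem \ref{thm:main} and to construct an anti-holomorphic diffeomorphism $\sigma$ tailor-made to this slice. Set $o:=eH_{\mathbb{C}}\in D$ and $S:=A\cdot o$, where $A$ denotes the one-dimensional abelian subgroup ($A_1$, $\widetilde{A}_0$, or $A_0$) produced by Theorem \ref{thm:main} in each of the three types. Then $S$ is one-dimensional, and the identity $G_{\mathbb{C}}=G_uAH_{\mathbb{C}}$ immediately yields $D=G_u\cdot S$, which is exactly the visibility condition (\ref{v1}).

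For $\sigma$, the natural starting point is the Cartan involution $\theta$ (entry-wise complex conjugation) of Section \ref{subsec:cartan involution}, which is anti-holomorphic and satisfies $\theta(H_{\mathbb{C}})=H_{\mathbb{C}}$, so $gH_{\mathbb{C}}\mapsto\theta(g)H_{\mathbb{C}}$ is a well-defined anti-holomorphic involution of $D$. Because $\mathfrak{a}\subset\sqrt{-1}\mathfrak{g}_u$, one has $\theta(a)=a^{-1}$ for every $a\in A$, so this naive map sends $aH_{\mathbb{C}}$ to $a^{-1}H_{\mathbb{C}}$ and therefore fails (\ref{s1}) as it stands. To correct this, I would choose an element $w\in G_u\cap H_{\mathbb{C}}$ whose adjoint action inverts $\mathfrak{a}$ (equivalently, $waw^{-1}=a^{-1}$ for every $a\in A$), and put
\[
\sigma(gH_{\mathbb{C}})\;:=\;w\,\theta(g)\,H_{\mathbb{C}}.
\]
This remains anti-holomorphic, and is well-defined because $\theta(H_{\mathbb{C}})=H_{\mathbb{C}}$.

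With such a $w$ in hand, both slice axioms follow by a short Cartan-decomposition bookkeeping. Writing any $g\in G_{\mathbb{C}}$ as $g=kah$ with $k\in G_u$, $a\in A$, $h\in H_{\mathbb{C}}$ via Theorem \ref{thm:main}, I would compute
\[
\sigma(gH_{\mathbb{C}})\;=\;wka^{-1}H_{\mathbb{C}}\;=\;(wkw^{-1})(wa^{-1})H_{\mathbb{C}}\;=\;(wkw^{-1})\,aw\,H_{\mathbb{C}}\;=\;(wkw^{-1})\,aH_{\mathbb{C}},
\]
using $\theta(k)=k$, $\theta(a)=a^{-1}$, $\theta(h)\in H_{\mathbb{C}}$, then the identity $wa^{-1}=aw$ obtained by inverting $waw^{-1}=a^{-1}$, and finally $w\in H_{\mathbb{C}}$. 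Since $wkw^{-1}\in G_u$, the right-hand side lies in $G_u\cdot aH_{\mathbb{C}}=G_u\cdot gH_{\mathbb{C}}$, proving (\ref{s2}); specializing to $g=a$ collapses this to $\sigma(aH_{\mathbb{C}})=aH_{\mathbb{C}}$, which is (\ref{s1}).

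The main obstacle is the explicit construction of $w$ in each of the three types. For Type R-2 one needs $w\in SU(3)=G_2\cap SL(3,\mathbb{C})$ with $\operatorname{Ad}(w)\tau_1=-\tau_1$; translated through the $G_2$-action on $\mathfrak{C}_{\mathbb{C}}$, this asks for a Cayley automorphism fixing $e_1$ and sending $e_2\mapsto-e_2$, and the multiplication table (\ref{eq:relation}) together with the alternative property of $\mathfrak{C}_{\mathbb{C}}$ then determines the action of $w$ on the remaining basis vectors up to a sign choice. For Types R-1$'$ and R-1 one seeks analogously a $w\in G_2$ inverting $\widetilde{A}_0$ (respectively $A_0$), and here $w$ can be taken to act as a sign change on a suitable subset of the Cayley basis. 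In each case the verification that the candidate $w$ actually lies in $G_2$, i.e.\ respects (\ref{eq:relation}), is a bounded but case-specific computation, and this is the step I expect to consume most of the work.
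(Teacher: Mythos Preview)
Your approach is correct and follows the same overall strategy as the paper: extract the slice $S=A\cdot o$ from Theorem~\ref{thm:main}, so that (\ref{v1}) is immediate, build the anti-holomorphic diffeomorphism $\sigma$ by twisting the Cartan involution $\theta$ so that it fixes $S$ pointwise, and then deduce (\ref{s2}) from (\ref{v1}) and (\ref{s1}) by the short $G_u A H_{\mathbb{C}}$ bookkeeping you wrote out.

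The one genuine difference is in how the twist is implemented. The paper chooses a single element $I_{+-}=\operatorname{diag}(1,-1,1,-1,1,-1,1,-1)\in G_2$ and sets $\sigma_0(g)=I_{+-}\,\overline{g}\,I_{+-}$, an anti-holomorphic \emph{group automorphism} of $G_{\mathbb{C}}$; one then only needs $I_{+-}$ to normalize $H_{\mathbb{C}}$, and the same $I_{+-}$ handles all three types uniformly (Lemma~\ref{lem:stable}, Lemma~\ref{lem:s1}). Your twist is by left translation, $\sigma(gH_{\mathbb{C}})=w\,\theta(g)H_{\mathbb{C}}$, which forces the stronger condition $w\in G_u\cap H_{\mathbb{C}}$. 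For Types R-1 and R-1$'$ this is harmless, since $I_{+-}\in G_2\subset H_{\mathbb{C}}$ and your $\sigma$ then literally coincides with the paper's. For Type R-2, however, $I_{+-}e_1=-e_1$ shows $I_{+-}\notin SU(3)$, so you must produce a different $w\in SU(3)$ inverting $A_1$; a diagonal sign matrix such as $\operatorname{diag}(1,1,-1,-1,1,1,-1,-1)$ does the job, and the Cayley-table check you outlined goes through. So your route works, at the cost of a case-dependent $w$; the paper's conjugation form buys uniformity across the three types and, as a bonus, makes $\mathfrak{g}^{\sigma_0}$ a normal real form (Proposition~\ref{prop:rank}).
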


In the following, 
we prove Theorem \ref{thm:visible-slice} for $(G_{\mathbb{C}},H_{\mathbb{C}})$ 
given in Table \ref{table:rank-one}. 
More precisely, 
we will verify three conditions (\ref{v1})--(\ref{s2}). 

\subsection{Verification of (\ref{v1})}
\label{subsec:v1}

We have already proved that there exists a one-dimensional real submanifold 
\begin{align}
\label{eq:slice}
S := AH_{\mathbb{C}}/H_{\mathbb{C}}. 
\end{align}
satisfying $D=G_u\cdot S$, which implies the condition (\ref{v1}). 
We list our choice of $S$ and the proposition showing $D=G_u\cdot S$ in Table \ref{table:v1}. 

\begin{table}[htbp]
\begin{align*}
\begin{array}{lcccc}
\hline 
\mbox{Type} & G_{\mathbb{C}} & H_{\mathbb{C}} & S & D=G_u\cdot S \\
\hline 
\mbox{R-1} & SO(7,\mathbb{C}) & G_2(\mathbb{C}) & S_0 & \mbox{Proposition \ref{prop:so7-orbit}}\\
\mbox{R-1}' & Spin(7,\mathbb{C}) & G_2(\mathbb{C}) & \widetilde{S}_0 
	& \mbox{Proposition \ref{prop:spin7-orbit}} \\
\mbox{R-2} & G_2(\mathbb{C}) & SL(3,\mathbb{C}) & S_1 & \mbox{Proposition \ref{prop:g2-orbit}}\\
\hline 
\end{array}
\end{align*}
\caption{Our choice of slice $S$ satisfying (\ref{s1})}
\label{table:v1}
\end{table}

\subsection{Verification of (\ref{s1})}
\label{subsec:s1}

In this subsection, 
we will verify the condition (\ref{s1}). 

First, let $I_{1,1}:=\operatorname{diag}(1,-1)$ and 
\begin{align*}
I_{+-}:&=\operatorname{diag}(I_{1,1},I_{1,1},I_{1,1},I_{1,1})
=\operatorname{diag}(1,-1,1,-1,1,-1,1,-1). 
\end{align*}
Since $(I_{+-}e_i)(I_{+-}e_j)=I_{+-}(e_ie_j)$ for $0\leq i,j\leq 7$, 
the element $I_{+-}$ lies in $G_2$. 
Here, 
we define an anti-holomorphic involution $\sigma _0$ on $SO(8,\mathbb{C})$ by 
\begin{align}
\label{eq:sigma0-so8}
\sigma _0(g)=I_{+-}\overline{g}I_{+-}\quad (g\in SO(8,\mathbb{C})). 
\end{align}

\begin{lemma}
\label{lem:stable}
The involution $\sigma _0$ stabilizes 
$Spin(7,\mathbb{C}),SO(7,\mathbb{C})$, $G_2(\mathbb{C})$ and $SL(3,\mathbb{C})$. 
\end{lemma}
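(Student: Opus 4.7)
The plan is to verify each of the four stabilization claims by short direct calculations, using two basic facts: the basis elements $e_0,\ldots,e_7$ are real (so $\overline{e_i}=e_i$), and complex conjugation in this basis commutes with Cayley multiplication, since the multiplication on $\mathfrak{C}_{\mathbb{C}}$ is the $\mathbb{C}$-bilinear extension of that on $\mathfrak{C}$ (so $\overline{xy}=\overline{x}\,\overline{y}$). Together with the fact noted just above the lemma that $I_{+-}\in G_2$, these will give everything. Observe also that $\sigma_0$ is a well-defined anti-holomorphic involution of $SO(8,\mathbb{C})$ because $I_{+-}$ is real with $I_{+-}^2=I_8$.

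First I would handle $SO(7,\mathbb{C})=\{g\in SO(8,\mathbb{C}):ge_0=e_0\}$. Since $I_{+-}e_0=e_0$, for $g\in SO(7,\mathbb{C})$ we get $\sigma_0(g)e_0=I_{+-}\overline{g}I_{+-}e_0=I_{+-}\overline{g e_0}=I_{+-}e_0=e_0$. For $G_2(\mathbb{C})$, conjugating the defining identity $(gx)(gy)=g(xy)$ in the basis $\{e_i\}$ yields $(\overline{g}x)(\overline{g}y)=\overline{g}(xy)$, so $\overline{g}\in G_2(\mathbb{C})$; membership of $\sigma_0(g)=I_{+-}\overline{g}I_{+-}$ then follows because $G_2(\mathbb{C})$ is a group containing $I_{+-}$. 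For $SL(3,\mathbb{C})=(G_2(\mathbb{C}))_{e_1}$, observe that $I_{+-}e_1=-e_1$, so the two $I_{+-}$ factors introduce a pair of signs that cancel: $\sigma_0(g)e_1=I_{+-}\overline{g}(-e_1)=-I_{+-}\overline{ge_1}=-I_{+-}e_1=e_1$; combined with the previous step this gives $\sigma_0(g)\in SL(3,\mathbb{C})$.

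Finally for $Spin(7,\mathbb{C})=B_3(\mathbb{C})$, I would mimic the $G_2(\mathbb{C})$ argument one level up. Given $g\in B_3(\mathbb{C})$ with associated $g_0\in SO(7,\mathbb{C})$ satisfying $(g_0 x)(gy)=g(xy)$, conjugation yields $(\overline{g_0}x)(\overline{g}y)=\overline{g}(xy)$, so $\overline{g}\in B_3(\mathbb{C})$ with partner $\overline{g_0}$. Substituting $I_{+-}x$ and $I_{+-}y$ for $x,y$ and then using $I_{+-}\in G_2$ twice (once to factor $I_{+-}$ through the internal Cayley product, once to pull a further $I_{+-}$ out of the external product) converts this into $(\sigma_0(g_0)x)(\sigma_0(g)y)=\sigma_0(g)(xy)$. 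Since $\sigma_0$ already stabilizes $SO(7,\mathbb{C})$ by the step above, the partner $\sigma_0(g_0)$ lies in $SO(7,\mathbb{C})$, so $\sigma_0(g)\in B_3(\mathbb{C})$.

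The only mildly delicate point is the bookkeeping in the $B_3(\mathbb{C})$ step, where one must shepherd two $I_{+-}$ factors through the asymmetric triality identity defining $B_3(\mathbb{C})$; all remaining verifications reduce to the symbol-pushing that $\sigma_0=\operatorname{Ad}(I_{+-})\circ(\text{conjugation})$ is tailor-made to enable.
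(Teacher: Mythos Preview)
Your proof is correct and follows essentially the same approach as the paper. The paper dispatches the cases $Spin(7,\mathbb{C})$, $SO(7,\mathbb{C})$, and $G_2(\mathbb{C})$ in one line by noting that $I_{+-}\in G_2$ (implicitly combined with the fact from Section~\ref{subsec:cartan involution} that the Cartan involution $\theta(g)=\overline{g}$ already stabilizes each of these groups), whereas you spell out the verification directly from the defining identities; for $SL(3,\mathbb{C})$ your computation $\sigma_0(g)e_1=I_{+-}\overline{g}(-e_1)=-I_{+-}e_1=e_1$ is exactly the paper's.
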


\begin{proof}
As $I_{+-}\in G_2$, it is obvious that 
$\sigma _0$ stabilizes $Spin(7,\mathbb{C})$, $SO(7,\mathbb{C})$ and $G_2(\mathbb{C})$. 

For the proof that $SL(3,\mathbb{C})$ is $\sigma _0$-stable, 
it is necessary to verify the relation $\sigma _0(g)e_1=e_1$ for any $g\in SL(3,\mathbb{C})$. 
Let $g$ be an element of $SL(3,\mathbb{C})$. 
It is obvious that $\overline{g}\in SL(3,\mathbb{C})$. 
Thus, we obtain 
\begin{align*}
(\sigma _0(g))e_1=I_{+-}\overline{g}I_{+-}(e_1)=I_{+-}\overline{g}(-e_1)=I_{+-}(-e_1)=e_1. 
\end{align*}
Then, $\sigma _0$ stabilizes $SL(3,\mathbb{C})$. 
\end{proof}

By Lemma \ref{lem:stable}, 
the restrictions of $\sigma _0$ to 
$Spin(7,\mathbb{C})$, $SO(7,\mathbb{C})$ and $G_2(\mathbb{C})$ becomes involutions on 
$Spin(7,\mathbb{C})$, $SO(7,\mathbb{C})$ and $G_2(\mathbb{C})$, respectively, 
which we use the same letter $\sigma _0$ to denote. 

We choose a Cartan involution $\theta $ of $SO(8,\mathbb{C})$ as 
in (\ref{eq:theta}). 
Clearly, $\theta $ commutes with $\sigma _0$, 
from which $\sigma _0$ stabilizes the maximal compact subgroup 
$SO(8)=\{ g\in SO(8,\mathbb{C}):\theta (g)=g\}$ 
of $SO(8,\mathbb{C})$. 
By definition, $Spin(7),SO(7)$ and $G_2$ are also $\sigma _0$-stable. 

Let $(G_{\mathbb{C}},H_{\mathbb{C}})$ be a non-symmetric reductive spherical pair 
contained in Table \ref{table:rank-one}. 
The anti-holomorphic involution $\sigma _0$ on $G_{\mathbb{C}}$ induces 
an anti-holomorphic diffeomorphism $\sigma$ 
on the non-symmetric spherical homogeneous space $G_{\mathbb{C}}/H_{\mathbb{C}}$ 
as follows: 
\begin{align}
\label{eq:sigma}
\sigma (gH_{\mathbb{C}}):=\sigma _0(g)H_{\mathbb{C}}\quad (g\in G_{\mathbb{C}}). 
\end{align}

Now, let us show the condition (\ref{s1}) for our choice of $\sigma$ in (\ref{eq:sigma}). 
The submanifold $S$ in $G_{\mathbb{C}}/H_{\mathbb{C}}$ given in (\ref{eq:slice}) 
comes from 
the one-dimensional abelian subgroup in $G_{\mathbb{C}}$ 
given by $A_0$ (Type R-1), $\widetilde{A}_0$ (Type R-1$'$), $A_1$ (Type R-2). 
Then, it is necessary for the verification of (\ref{s1}) to show the following: 

\begin{lemma}
\label{lem:s1}
$\sigma _0|_{A}=\operatorname{id}_A$ for $A=A_0,\widetilde{A}_0,A_1$. 
\end{lemma}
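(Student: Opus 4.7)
The plan is to verify Lemma \ref{lem:s1} by a direct matrix computation, exploiting the compatibility between the block-diagonal structure of $I_{+-}$ and the block structures of the three abelian groups $A_0$, $\widetilde{A}_0$, $A_1$.

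First I would observe that $I_{+-}$ is block-diagonal with four copies of $I_{1,1}=\operatorname{diag}(1,-1)$ on the diagonal, so conjugation by $I_{+-}$ acts block-wise on any $2\times 2$ block decomposition. The key computation at the $2\times 2$ level is
\[
I_{1,1}\,\overline{d_\theta}\,I_{1,1}=d_\theta,\qquad I_{1,1}\,\overline{I_2}\,I_{1,1}=I_2,
\]
which is immediate from (\ref{eq:d}): complex conjugation sends $d_\theta$ to $d_{-\theta}$, while conjugation by $I_{1,1}$ negates the off-diagonal entries, converting $d_{-\theta}$ back to $d_\theta$.

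Using this, the cases of $A_0$ and $\widetilde{A}_0$ are essentially free. The element $a_\theta \in A_0$ is block-diagonal with blocks $I_2, d_\theta, d_\theta, d_\theta$, and $\widetilde{a}_\theta \in \widetilde{A}_0$ is block-diagonal with blocks $d_\theta, d_{-\theta/3}, d_{-\theta/3}, d_{-\theta/3}$, so $\sigma_0$ acts as the identity on each block and therefore fixes $a_\theta$ and $\widetilde{a}_\theta$ for every $\theta\in\mathbb{R}$.

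The case of $A_1$ requires a slightly different partition: writing $I_{+-}=\operatorname{diag}(J,J)$ with $J=\operatorname{diag}(1,-1,1,-1)$, the element $t_\theta$ is block-diagonal with two $4\times 4$ blocks $\exp\delta_{(0,\theta)}$ and $\exp\delta_{(-\theta/2,-\theta/2)}$, and it suffices to prove
\[
J\,\overline{\exp\delta_{(x,y)}}\,J = \exp\delta_{(x,y)}.
\]
From the explicit formula for $\exp\delta_{(x,y)}$ displayed just after (\ref{eq:a1}), the non-real entries occur only at the positions $(1,4),(4,1),(2,3),(3,2)$ and are of the form $\pm\sqrt{-1}\sinh(\cdot)$. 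At each such index $(i,j)$ the diagonal entries of $J$ satisfy $J_{ii}J_{jj}=-1$, so conjugation by $J$ negates exactly those four entries, precisely cancelling the sign flip produced by complex conjugation; the real diagonal entries and zero entries are left invariant. This yields $\sigma_0(t_\theta)=t_\theta$.

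The main obstacle is purely bookkeeping: making sure the $2\times 2$ or $4\times 4$ block partition of each abelian subgroup is compatible with that of $I_{+-}$, and tracking where the imaginary entries sit. There is no conceptual subtlety — the abelian subgroups were engineered so that their nonreal entries lie exactly in the positions where the parity twist from $I_{+-}$ cancels complex conjugation.
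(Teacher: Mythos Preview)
Your proof is correct and follows essentially the same approach as the paper: a direct block-wise matrix computation using the key identity $I_{1,1}\overline{d_\theta}I_{1,1}=d_\theta$ for $A_0$ and $\widetilde{A}_0$, and a $4\times 4$ block argument with $J=\operatorname{diag}(1,-1,1,-1)$ for $A_1$. The only cosmetic difference is that for $A_1$ the paper works at the Lie algebra level, observing $J\,\delta_{(x,y)}\,J=\delta_{(-x,-y)}$ and $\overline{\delta_{(x,y)}}=\delta_{(-x,-y)}$ before exponentiating, whereas you check the identity directly on the entries of $\exp\delta_{(x,y)}$; the content is the same.
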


\begin{proof}
First, 
let $a_{\theta }=\operatorname{diag}(I_2,d_{\theta },d_{\theta },d_{\theta })$ be an element of $A_0$. 
Then, the complex conjugation of $a_{\theta }$ equals $a_{(-\theta )}$. 
Thus, we compute 
\begin{align*}
\sigma _0(a_{\theta })
&=I_{+-}a_{(-\theta )}I_{+-}\\
&=\operatorname{diag}
	(I_{1,1}I_2I_{1,1},
		I_{1,1}d_{(-\theta )}I_{1,1},I_{1,1}d_{(-\theta )}I_{1,1},I_{1,1}d_{(-\theta )}I_{1,1})\\
&=\operatorname{diag}(I_2,d_{\theta },d_{\theta },d_{\theta })
=a_{\theta }. 
\end{align*}
Hence, $\sigma _0|_{A_0}=\operatorname{id}_{A_0}$ holds. 

Next, let $\widetilde{a}_{\theta }
=\operatorname{diag}(d_{\theta },d_{(-\theta /3)},d_{(-\theta /3)},d_{(-\theta /3)})$ 
be an element of $\widetilde{A}_0$. 
Then, we have 
\begin{align*}
\sigma _0(\widetilde{a}_{\theta })
&=\operatorname{diag}(I_{1,1}d_{(-\theta )}I_{1,1},I_{1,1}d_{(\theta /3)}I_{1,1}
	,I_{1,1}d_{(\theta /3)}I_{1,1},I_{1,1}d_{(\theta /3)}I_{1,1})\\
&=\operatorname{diag}(d_{\theta },d_{(-\theta /3)},d_{(-\theta /3)},d_{(-\theta /3)})
=\widetilde{a}_{\theta }. 
\end{align*}
This implies that $\sigma _0$ is the identity map on $\widetilde{A}_0$. 

Finally, 
let $t_{\theta }=\exp \tau _{\theta }
=\operatorname{diag}(\exp \delta _{(0,\theta )},\exp \delta _{(-\theta /2,-\theta /2)})$ 
be an element of $A_1$. 
We put $J_{+-}:=\operatorname{diag}(1,-1,1,-1)$. 
Then, we have $J_{+-}\delta _{(x,y)}J_{+-}=\delta _{(-x,-y)}$, 
Hence, we obtain 
\begin{align*}
\sigma _0(t_{\theta })
&=I_{+-}\operatorname{diag}(\exp \delta _{(0,-\theta )},\exp \delta _{(\theta /2,\theta /2)})I_{+-}\\
&=\operatorname{diag}(\exp (J_{+-}\delta _{(0,-\theta )}J_{+-}),
	\exp (J_{+-}\delta _{(\theta /2,\theta /2)}J_{+-}))\\
&=\operatorname{diag}(\exp \delta _{(0,\theta )},\exp \delta _{(-\theta /2,-\theta /2)})
=t_{\theta }. 
\end{align*}
Hence, $\sigma _0|_{A_1}=\operatorname{id}_{A_1}$. 

Therefore, Lemma \ref{lem:s1} has been proved. 
\end{proof}

Thanks to Lemma \ref{lem:s1}, the following equality holds 
for any $aH_{\mathbb{C}}\in S=AH_{\mathbb{C}}/H_{\mathbb{C}}$ ($a\in A$): 
\begin{align*}
\sigma (aH_{\mathbb{C}})=\sigma _0(a)H_{\mathbb{C}}=aH_{\mathbb{C}}. 
\end{align*}
Hence, we have verified $\sigma |_{S}=\operatorname{id}_S$, namely, (\ref{s1}). 

\subsection{Verification of (\ref{s2})}
\label{subsec:s2}

In this subsection, we shall see that the condition (\ref{s2}) follows from 
(\ref{v1}) and (\ref{s1}) and the involution $\sigma _0$ given by (\ref{eq:sigma0-so8}). 

Retain the setting as in Section \ref{subsec:s1}. 
Let $x$ be an element of the spherical homogeneous space $G_{\mathbb{C}}/H_{\mathbb{C}}$ 
of rank-one type. 
By the condition (\ref{v1}), 
one can find elements $k\in G_u$ and $a\in A$ such that $x=k\cdot aH_{\mathbb{C}}$. 
As $\sigma |_S=\operatorname{id}_S$ (the condition (\ref{s1})), we have 
\begin{align*}
\sigma (x)
=\sigma _0(k)\cdot \sigma (aH_{\mathbb{C}})
=\sigma _0(k)\cdot aH_{\mathbb{C}}
=(\sigma _0(k)k^{-1})\cdot x. 
\end{align*}
Here, $\sigma _0$ stabilizes $G_u$ (see Section \ref{subsec:s1}). 
Then, $\sigma _0(k)k^{-1}$ is an element of $G_u$. 
Hence, $(\sigma _0(k)k^{-1})\cdot x$ lies in the $G_u$-orbit through $x$, 
from which we have shown $\sigma (x)\in G_u\cdot x$. 
Hence, the condition (\ref{s2}) has been verified. 

\subsection{Proof of Theorem \ref{thm:visible-slice}}
\label{subsec:proof-visible}

For a non-symmetric reductive spherical pair $(G_{\mathbb{C}},H_{\mathbb{C}})$ of rank-one type, 
we have verified the condition (\ref{v1}) in Section \ref{subsec:v1}, 
(\ref{s1}) in Section \ref{subsec:s1} and (\ref{s2}) in Section \ref{subsec:s2}. 
Therefore, Theorem \ref{thm:visible-slice} has been proved. 

\subsection{Remark}
\label{subsec:remark}

We end this paper by the observation of $\sigma _0$ from 
the corresponding fixed point set of the Lie algebra as follows. 

Let $(G_{\mathbb{C}},H_{\mathbb{C}})$ be a non-symmetric spherical pair of rank-one type 
and $\mathfrak{g}$ the Lie algebra of a complex simple Lie group $G_{\mathbb{C}}$. 
We use the same letter $\sigma _0$ to denote the differential automorphism on $\mathfrak{g}$. 
We write $\mathfrak{g}^{\sigma _0}$ as the fixed point set of $\sigma _0$ in $\mathfrak{g}$. 

Our choice of $\sigma _0$ satisfies that 
$(\mathfrak{so}(8,\mathbb{C}))^{\sigma _0}$ is isomorphic to $\mathfrak{so}(4,4)$. 
Then, its real rank, denoted by 
$\operatorname{rank}_{\mathbb{R}}(\mathfrak{so}(8,\mathbb{C}))^{\sigma _0}$, 
equals four, which coincides with $\operatorname{rank}(\mathfrak{so}(8,\mathbb{C}))$. 
This means $(\mathfrak{so}(8,\mathbb{C}))^{\sigma _0}$ is a normal real form of 
$\mathfrak{so}(8,\mathbb{C})$. 
Further, we have 
\begin{gather*}
(\mathfrak{so}(7,\mathbb{C}))^{\sigma _0}\simeq \mathfrak{so}(7,\mathbb{C})\cap \mathfrak{so}(4,4)
\simeq \mathfrak{so}(3,4),\\
(\mathfrak{spin}(7,\mathbb{C}))^{\sigma _0}\simeq \mathfrak{spin}(7,\mathbb{C})\cap \mathfrak{so}(4,4)
\simeq \mathfrak{spin}(3,4),\\
(\mathfrak{g}_2(\mathbb{C}))^{\sigma _0}\simeq \mathfrak{g}_2(\mathbb{C})\cap \mathfrak{so}(4,4)
\simeq \mathfrak{g}_{2(2)}. 
\end{gather*}
It turns out that 
the Lie algebra $\mathfrak{g}^{\sigma _0}$ satisfies 

\begin{proposition}
\label{prop:rank}
$\operatorname{rank}_{\mathbb{R}}\mathfrak{g}^{\sigma _0}=\operatorname{rank}\mathfrak{g}$. 
\end{proposition}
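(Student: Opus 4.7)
The plan is to verify the equality case by case, using the three explicit identifications of $\mathfrak{g}^{\sigma _0}$ already recorded in the display immediately preceding the proposition. The unifying observation is that in each case, $\mathfrak{g}^{\sigma _0}$ turns out to be the \emph{split} (normal) real form of $\mathfrak{g}$, and for any split real form the real rank coincides tautologically with the complex rank of its complexification.

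First I would read off the complex ranks: both $\mathfrak{so}(7,\mathbb{C})$ and $\mathfrak{spin}(7,\mathbb{C})$ are of Cartan type $B_3$ and therefore have complex rank three, while $\mathfrak{g}_2(\mathbb{C})$ has complex rank two. On the real side, the paper has already identified $\mathfrak{g}^{\sigma _0}$ with $\mathfrak{so}(3,4)$, $\mathfrak{spin}(3,4)$, and $\mathfrak{g}_{2(2)}$ respectively. It is then enough to record: $\operatorname{rank}_{\mathbb{R}}\mathfrak{so}(3,4)=\min(3,4)=3$, witnessed by three commuting hyperbolic generators acting on three mutually orthogonal signature-$(1,1)$ planes; $\operatorname{rank}_{\mathbb{R}}\mathfrak{spin}(3,4)=3$, since covering preserves the restricted root system; and $\operatorname{rank}_{\mathbb{R}}\mathfrak{g}_{2(2)}=2$ by the very definition of $\mathfrak{g}_{2(2)}$ as the split real form of $\mathfrak{g}_2(\mathbb{C})$.

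A hands-on alternative, closer in spirit to the explicit computations of the preceding sections, is to exhibit a maximal $\mathbb{R}$-diagonalizable abelian subalgebra of $\mathfrak{g}^{\sigma _0}$ directly inside the matrix realization of Section \ref{sec:preliminaries}. For $\mathfrak{g}=\mathfrak{g}_2(\mathbb{C})$ one would enlarge $\mathbb{R}\tau _1$ from (\ref{eq:a1-algebra})---already shown to be $\sigma _0$-fixed in the proof of Lemma \ref{lem:s1}---by one further commuting hyperbolic generator lying in $\mathfrak{g}_2(\mathbb{C})\cap \mathfrak{so}(4,4)$; for the $B_3$ cases, three commuting matrices modelled on $\alpha _{\theta }$ from (\ref{eq:a0-algebra}), placed on three mutually orthogonal mixed-signature two-planes, play the same role, and dimension counting forces maximality.

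The main obstacle is the exceptional case $\mathfrak{g}=\mathfrak{g}_2(\mathbb{C})$: confirming that $\mathfrak{g}_2(\mathbb{C})\cap \mathfrak{so}(4,4)$ really lands on the split form $\mathfrak{g}_{2(2)}$, rather than on the compact $\mathfrak{g}_2$ or on the underlying real Lie algebra of $\mathfrak{g}_2(\mathbb{C})$, is not a classical matrix-signature computation. One resolves this either by invoking the classification of real forms of $G_2$ (which admits only those three possibilities) combined with the lower bound $\operatorname{rank}_{\mathbb{R}}\mathfrak{g}_2(\mathbb{C})^{\sigma _0}\ge 2$ supplied by two explicit commuting split generators, or by constructing a maximally split Cartan subalgebra directly from the Cayley multiplication table (\ref{eq:relation}). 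For the classical $B_3$ cases no analogous subtlety arises, since the signature of the invariant bilinear form pins down $\mathfrak{g}^{\sigma _0}$ unambiguously.
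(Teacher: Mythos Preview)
Your proposal is correct and follows essentially the same route as the paper: the paper simply records the identifications $\mathfrak{g}^{\sigma_0}\simeq \mathfrak{so}(3,4),\ \mathfrak{spin}(3,4),\ \mathfrak{g}_{2(2)}$ in the display immediately preceding the proposition and then states the rank equality as a direct consequence, without a separate proof environment. Your write-up is more explicit than the paper's (you spell out the ranks and flag the $\mathfrak{g}_{2(2)}$ identification as the only non-routine step), but the underlying argument is the same case-by-case check that each $\mathfrak{g}^{\sigma_0}$ is the split real form.
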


We have found the same property as Proposition \ref{prop:rank} in the non-symmetric spherical 
homogeneous spaces of line bundle case. 
Namely, we have prove that if $G_{\mathbb{C}}/H_{\mathbb{C}}$ 
is a line bundle $G_{\mathbb{C}}/[K_{\mathbb{C}},K_{\mathbb{C}}]$ over the complexification 
$G_{\mathbb{C}}/K_{\mathbb{C}}$ of an irreducible Hermitian symmetric space $G/K$ of non-tube type, 
then the $G_u$-action on $G_{\mathbb{C}}/[K_{\mathbb{C}},K_{\mathbb{C}}]$ is strongly visible 
and one can take a slice $S$ and an anti-holomorphic diffeomorphism $\sigma$ satisfying 
(\ref{v1})--(\ref{s2}) and Proposition \ref{prop:rank} 
(see \cite[Lemma 2.2]{dedicata} and \cite{pja}). 
The key ingredient is to find a Cartan decomposition 
for $(G_{\mathbb{C}},[K_{\mathbb{C}},K_{\mathbb{C}}])$ explicitly. 

We can show that for any reductive spherical pair $(G_{\mathbb{C}},H_{\mathbb{C}})$ 
we have a Cartan decomposition by giving an explicit description of the abelian part 
and that the $G_u$-action on the spherical homogeneous space $G_{\mathbb{C}}/H_{\mathbb{C}}$ 
is strongly visible 
with a slice coming from a Cartan decomposition for $(G_{\mathbb{C}},H_{\mathbb{C}})$ 
and an anti-holomorphic diffeomorphism coming from an involution on $G_{\mathbb{C}} $ 
satisfying Proposition \ref{prop:rank}. 
In fact, 
we have shown the strong visibility in some cases, 
see \cite{dedicata, jms, apam, pja}, 
in particular, we have provided a slice explicitly 
(note that our choice of slice in \cite{jms,apam} is not abelian). 
The others will be explained in the forthcoming papers 
which contain how to find an explicit description of the abelian part for a Cartan decomposition 
(cf. \cite{cayley}). 



\end{document}